\title{On the number of cusps on cuspidal curves on Hirzebruch surfaces}
\author{Torgunn Karoline Moe}
\address{Department of Mathematics, University of Oslo, P.O. Box 1053 Blindern, NO-0316 Oslo, NORWAY}
\email{torgunnk@math.uio.no}
\subjclass[2000]{14H20, 14H45.}
\keywords{Cuspidal curves, Hirzebruch surfaces, the number of cusps, open surfaces, logarithmic Kodaira dimension}
\date{\today}
\theoremstyle{definition}
\theoremstyle{plain}% default
\newtheorem{thm}{Theorem}[section]
\newtheorem{lem}[thm]{Lemma}
\newtheorem{prp}[thm]{Proposition}
\newtheorem{cor}[thm]{Corollary}
\newtheorem*{cor*}{Corollary}
\newtheorem{conj}[thm]{Conjecture}
\newtheorem*{conj*}{Conjecture}
\newtheorem*{thm*}{Theorem}
\newtheorem*{prb*}{Problem}
\newtheorem*{qe*}{Question}
\theoremstyle{definition}
\theoremstyle{remark}
\numberwithin{equation}{section}
\newcommand{\Po}{\ensuremath{\mathbb{P}^2}}
\newcommand{\Fe}{\ensuremath{\mathbb{F}_e}}
\newcommand{\Fen}{\ensuremath{\mathbb{F}_1}}
\newcommand{\Pot}{\ensuremath{\mathbb{P}^1}}
\newcommand{\C}{\ensuremath{\mathbb{C}}}
\renewcommand{\H}{\mathrm{H}}
\newcommand{\h}{\mathrm{h}}
\newcommand{\Z}{\ensuremath{\mathbb{Z}}}
\newcommand{\N}{\ensuremath{\mathbb{N}}}
\newcommand{\Q}{\ensuremath{\mathbb{Q}}}
\newcommand{\V}{\ensuremath{\mathscr{V}}}
\newcommand{\D}{\ensuremath{\mathscr{D}}}
\newcommand{\RH}{M}
\newcommand{\LH}{L}
\newcommand{\qs}{\ensuremath{q}}
\newcommand{\Os}{\ensuremath{\mathscr{O}}}
\newcommand{\noi}{\noindent}
\newcommand{\ol}{\overline}
\newcommand{\beq}{\begin{equation*}}
\newcommand{\eeq}{\end{equation*}}
\newcommand{\bsp}{\begin{split}}
\newcommand{\esp}{\end{split}}
\numberwithin{equation}{section}
\newcommand{\lkkf}{\ol{\kappa}(\Fe \setminus C)}
\newcommand{\ldot}{\ensuremath{\,.\,}}
\newcommand{\hide}[1]{}
\begin{document}

\begin{abstract}
In this article we give an upper bound for the number of cusps on a cuspidal curve on a Hirzebruch surface. We adapt the results that have been found for a similar question asked for cuspidal curves on the projective plane, and restate the results in this new setting. 
\end{abstract}

\maketitle

\setcounter{tocdepth}{1}

\tableofcontents

\section{Introduction}
Let $C$ be a reduced and irreducible curve of geometric genus $g$ on a smooth complex surface $X$. A point $p$ on $C$ is called a \emph{cusp} if it is singular and if the germ $(C,p)$ of $C$ at $p$ is irreducible. A curve $C$ is called \emph{cuspidal} if all of its singularities are cusps. 

For any two divisors $C$ and $C'$ on $X$, we calculate the \emph{intersection number} $C \ldot C'$ using linear equivalence and the pairing ${\mathrm{Pic}(X) \times \mathrm{Pic}(X) \rightarrow \Z}$ \cite[Theorem V 1.1, pp.357--358]{Hart:1977}.

By \cite[Theorem V 3.9, p.391]{Hart:1977}, there exists for any curve $C$ on a surface $X$ a sequence of $t$ monoidal transformations, $$V=V_t \xrightarrow{\sigma_t} V_{t-1} \xrightarrow{} \cdots \xrightarrow{} V_1 \xrightarrow{\sigma_1} V_0=X,$$ such that the reduced total inverse image of $C$ under the composition $\sigma:V \rightarrow X$, $$D := \sigma^{-1}(C)_{\mathrm{red}},$$ is a \emph{simple normal crossing divisor} (SNC-divisor) on the smooth complete surface $V$ (see \cite{Iitaka}). The pair $(V,D)$ and the transformation $\sigma$ are referred to as an \emph{embedded resolution} of $C$, and it is called a \emph{minimal embedded resolution} of $C$ when $t$ is the smallest integer such that $D$ is an SNC-divisor.

Let $p$ be a cusp on a curve $C$, let $m$ denote the multiplicity of $p$, and let $m_i$ denote the multiplicity of the infinitely near points $p_i$ of $p$. Then the \emph{multiplicity sequence} $\ol{m}$ of the cusp $p$ is defined to be the sequence of integers $$\ol{m}=[m,m_1,\ldots,m_{t-1}],$$ where $t$ is the number of monoidal tranformations in the local minimal embedded resolution of the cusp, and we have $m_{t-1}=1$ (see \cite{Brieskorn}). \hide{We follow the convention of compacting the notation by omitting the number of ending 1s in the sequence and indexing the number of repeated elements, for example, we write $$[6,6,3,3,3,2,1,1]=[6_2,3_3,2].$$

The collection of multiplicity sequences of a cuspidal curve will be referred to as its \emph{cuspidal configuration}.}

Now the question of how many cusps a cuspidal curve on a surface can have naturally arises. The main result in this article is an upper bound for the number of cusps on cuspidal curves on Hirzebruch surfaces. Note that the theorem and the proof is very similar to the proof in the case of plane curves (cf. \cite{Tono05}).

\begin{thm*}\label{TTH}
The number of cusps $s$ on a cuspidal curve $C$ of genus $g$ on a Hirzebruch surface has an upper bound, $$s \leq \frac{21g+29}{2}.$$
\end{thm*}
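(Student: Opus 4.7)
The plan is to follow the strategy that Tono \cite{Tono05} used to bound the number of cusps on plane cuspidal curves, adapted to the Hirzebruch setting. First I would pass to the minimal embedded resolution $\sigma : V \to \Fe$ of $C$ with its associated SNC divisor $D = \sigma^{-1}(C)_{\mathrm{red}}$, and study the logarithmic Kodaira dimension $\lkkf = \ol{\kappa}(V \setminus D)$. When this dimension equals $2$ (the log general type case), a logarithmic Bogomolov--Miyaoka--Yau-type inequality provides the numerical constraint that will deliver the bound; the remaining values of $\lkkf$ will be dealt with by classification.

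The degenerate cases $\lkkf \in \{-\infty, 0, 1\}$ would be handled first. The classification of open surfaces of small log Kodaira dimension, applied to $\Fe \setminus C$, severely restricts the possible cuspidal configurations: in each regime one finds that either $s$ is bounded by a small absolute constant, or the curve $C$ belongs to a very restricted list that can be checked by hand against the bound $\tfrac{21g+29}{2}$.

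In the main case $\lkkf = 2$, I would pass to an \emph{almost minimal model} $(\ol{V}, \ol{D})$ by repeatedly contracting the superfluous $(-1)$-curves that appear as components of $D$, until the pair is log-minimal. On this model the logarithmic BMY inequality
$$(K_{\ol{V}} + \ol{D})^2 \leq 3\, e(\ol{V} \setminus \ol{D})$$
holds, and both sides can be expanded in terms of $C$ and the resolution data. The right-hand side is rewritten via $e(\Fe \setminus C) = e(\Fe) - e(C) = 4 - e(C)$ together with the standard description of $e(C)$ in terms of $g$ and the local topological (Milnor-type) invariants of the cusps. The left-hand side is rewritten using adjunction on $\Fe$, the numerical class of $C$ in $\mathrm{Pic}(\Fe)$, and the contributions of the exceptional trees above each cusp, which are governed by the multiplicity sequences $\ol{m}_i$.

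The main obstacle will be controlling the contribution of each cusp to both sides of this inequality tightly enough that the final estimate is linear in $g$ with coefficient $\tfrac{21}{2}$. The key numerical lemma, in the spirit of Matsuoka--Sakai, Orevkov and Tono, is that each cusp $p_i$ contributes at least a fixed positive amount to the quantity $3\,e(\ol{V} \setminus \ol{D}) - (K_{\ol{V}} + \ol{D})^2$, after isolating the terms that depend only on $g$ and on the invariants of $\Fe$. Summing these local contributions over the $s$ cusps and rearranging the resulting inequality produces a bound of the form $s \leq \tfrac{21}{2} g + C$. The constant $C = \tfrac{29}{2}$ reflects the specific topology and intersection theory of $\Fe$ (in particular $e(\Fe) = 4$ and $K_{\Fe}^2 = 8$), which differ from the corresponding values for $\Po$ that enter Tono's constant.
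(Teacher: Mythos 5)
Your overall skeleton (compute the log Kodaira dimension, pass to an almost minimal model, invoke log BMY) matches the paper's, but the step that actually produces the bound is missing, and the substitute you propose would not work as stated. The paper does not expand $(K_V+D)^2$ in terms of multiplicity sequences, and it proves no lemma of the form ``each cusp contributes a fixed positive amount to $3e(V\setminus D)-(K_V+D)^2$''; it is not clear how such an estimate could be made uniform over all multiplicity sequences, nor how it would yield the precise coefficients $\tfrac{21}{2}$ and $\tfrac{29}{2}$. What is actually used is Tono's twig-counting consequence of log BMY (Proposition \ref{Tono44}, i.e.\ \cite[Corollary 4.4]{Tono05}): for an almost minimal pair $(V',D')$ of log general type containing no $(-2)$-rod and no $(-2)$-fork, the number $l$ of rational maximal twigs of $D'$ satisfies $l\leq 12e(V'\setminus D')+5-3p_a(D')$. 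The geometric input connecting this to $s$ is that each cusp forces at least two rational maximal twigs to survive in $D'$, giving $2s\leq l$. Proving that requires showing that the almost-minimalization morphism $\mu:V\to V'$ contracts neither the last exceptional curve $E_j$ over each cusp nor the strict transform $\tilde{C}$ (both are branching components once $s\geq 3$), and that $\mu(E_j)$, having self-intersection $\geq -1$, cannot lie on any rational maximal twig. One also needs $e(V'\setminus D')=2g+2-n$, which uses the affineness of $\Fe\setminus C$ to force $D\ldot M_j=1$ for each contracted $(-1)$-curve $M_j\not\subset D$. With $p_a(D')=g$ this gives $2s\leq 12(2g+2-n)+5-3g\leq 21g+29$. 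None of this bookkeeping appears in your outline, and it is the heart of the argument.

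A second gap: your plan to dispose of the cases $\lkkf\in\{-\infty,0,1\}$ ``by classification'' is neither carried out nor needed. Since one seeks an upper bound one may assume $s\geq 3$, and the paper proves directly (Theorem \ref{logkfe}, a Wakabayashi-type statement adapted to $\Fe$) that a cuspidal curve of positive genus, or a rational one with at least three cusps, has $\lkkf=2$; so there are no residual low-Kodaira-dimension cases to treat. Finally, a small correction to your numerology: the shift of the constant from $\tfrac{17}{2}$ to $\tfrac{29}{2}$ comes solely from $e(\Fe)=4$ versus $e(\Po)=3$, which changes $e$ of the complement from $2g+1$ to $2g+2$ and hence adds $12$ to the constant in Proposition \ref{Tono44}; the value $K_{\Fe}^2=8$ plays no role.
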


In particular, we have the following corollary.
\begin{cor*}
A rational cuspidal curve on $\Fe$ can not have more than 14 cusps.
\end{cor*}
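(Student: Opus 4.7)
The plan is to adapt Tono's argument for plane cuspidal curves \cite{Tono05} to the Hirzebruch setting. Let $C \subset \Fe$ be a cuspidal curve of geometric genus $g$ with $s$ cusps, let $\sigma: V \to \Fe$ be a minimal embedded resolution, and let $D = \sigma^{-1}(C)_{\mathrm{red}}$ be the associated SNC divisor. Because every cusp is unibranched the normalization $\tilde{C} \to C$ is a set-theoretic bijection, so $e(C) = e(\tilde{C}) = 2 - 2g$, and since $e(\Fe) = 4$ we obtain
\[
e(V \setminus D) \;=\; e(\Fe \setminus C) \;=\; 2 + 2g.
\]

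I would split on the logarithmic Kodaira dimension $\lkkf = \kappa(V, K_V + D)$. If $\lkkf \leq 1$, then $\Fe \setminus C$ is either affine-ruled or carries a $\C^{*}$-fibration, and the classification of such open surfaces (combined with $C$ being an irreducible cuspidal boundary) pins $s$ to a small absolute constant, well within the claimed bound. This range is expected to be routine but must be dispatched first.

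The substantial case is $\lkkf = 2$. Here I would pass to an \emph{almost minimal} model $(\bar{V}, \bar{D})$ by successively contracting the superfluous $(-1)$-curves in $D$ in the sense of Miyanishi--Tsunoda. The Zariski decomposition $K_{\bar{V}} + \bar{D} = P + N$ then has $P$ nef, and the logarithmic Bogomolov--Miyaoka--Yau inequality of Kobayashi--Nakamura--Sakai gives
\[
P^{2} \;\leq\; 3\, \bar{e}(\bar{V} \setminus \bar{D}).
\]
The left-hand side can be rewritten in terms of $(K_V + D)^{2}$ and the multiplicity sequences of the cusps using $K_V + D = \sigma^{*}(K_{\Fe} + C) + E$ with $E$ encoding the resolution data; the genus formula $p_a(C) = \tfrac{1}{2} C \cdot (C + K_{\Fe}) + 1$ on $\Fe$ together with $p_a(C) = g + \sum_{i} \delta_{i}$ converts these intersection numbers into expressions in $s$ and $g$. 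A Matsuoka--Sakai-type lower estimate on the local contribution of each cusp to $P^{2} + N^{2}$, combined with the Euler-characteristic input above, then yields a linear inequality that simplifies to $2s \leq 21g + 29$.

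The principal obstacle will be the almost-minimal reduction. The Hirzebruch surface has two distinct ruling classes (and a negative section when $e \geq 1$), so the combinatorics of contractible $(-1)$-components of $D$ is richer than on $\Po$, and one must carefully track which components survive under the log MMP. Degenerate cusp configurations --- for instance chains of $(-2)$-curves arising from $[2,2,\ldots]$-type cusps that attach to fiber components in $D$ --- are likely to require separate arguments in the spirit of Tono, and it is in this bookkeeping that the constant $+29$ (rather than the planar $+17$) should emerge, through the Euler-characteristic shift $e(\Fe) - e(\Po) = 1$ together with the modified intersection pairing on $\Fe$. The stated corollary is then immediate: substituting $g = 0$ gives $s \leq 29/2$, and since $s$ is a non-negative integer, $s \leq 14$.
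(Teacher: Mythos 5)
Your overall architecture (minimal embedded resolution, the Euler characteristic computation $e(\Fe\setminus C)=2g+2$, the dichotomy on $\lkkf$, passage to an almost minimal model, and the logarithmic B--M--Y inequality) matches the skeleton of the paper's argument, and your final numerical step ($g=0$ gives $s\le 29/2$, hence $s\le 14$) is exactly how the corollary follows from the main theorem. But two of the steps you defer are precisely where the real work lies, and as written they are gaps rather than routine verifications. First, the case $\lkkf\le 1$: you assert that the classification of affine-ruled and $\C^{*}$-fibred surfaces ``pins $s$ to a small absolute constant,'' but no such off-the-shelf statement is invoked or proved. What the paper actually establishes (Theorem 3.4, a Wakabayashi-type result occupying most of Section 3) is the contrapositive: if $g=0$ and $C$ has at least three cusps then $\lkkf=2$. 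Since one may assume $s\ge 3$ when seeking an upper bound, this is what reduces everything to the $\lkkf=2$ case; you need either this theorem or a genuine substitute, not an appeal to a classification.

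Second, and more seriously, your mechanism for extracting a bound \emph{linear in $s$ and depending only on $g$} from $P^{2}\le 3e$ does not work as described. A Matsuoka--Sakai-type estimate of the local contribution of each cusp produces bounds on $s$ in terms of the degree (here, the type $(a,b)$), and the numbers $a,b$ do not appear in the statement, so they must be eliminated; you give no mechanism for doing so. The paper instead follows Tono: it applies a twig-counting inequality (Proposition 4.1, $l\le 12\,e(V'\setminus D')+5-3p_a(D')$, itself a consequence of the log B--M--Y inequality) to the almost minimal pair $(V',D')$, and then proves the combinatorial estimate $2s\le l$ by showing that the contraction $\mu:V\to V'$ contracts neither the last exceptional curve $E_j$ over each cusp nor the strict transform $\tilde{C}$ (both are branching components), so that each cusp forces at least two rational maximal twigs in $D'$. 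That survival argument, together with $e(V'\setminus D')=2g+2-n$ and $p_a(D')=g$, is where $2s\le 21g+29$ comes from. Your guess that the constant $29$ (versus $17$ for $\Po$) reflects $e(\Fe)-e(\Po)=1$ is correct in spirit, since $12\cdot 1=29-17$, but the inequality itself is not obtained by the local-contribution route you propose.
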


\subsection{Structure}
In Section \ref{M} we motivate the study of the question of how many cusps a cuspidal curve on a Hirzebruch surface can have by recalling the history of the study of this problem on the projective plane. In Section \ref{PR} we give the basic definitions and preliminary results for cuspidal curves on Hirzebruch surfaces. Section \ref{ON} contains the main result of this article. Here we state and prove the above theorem. 

\subsection{Acknowledgements}
This article consists of results from my PhD-thesis \cite{MOEPHD}, and it is the first of two articles (see \cite{MOECCH}). I am very grateful to Professor Ragni Piene for suggesting cuspidal curves on Hirzebruch surfaces as the topic of my thesis, and for all the help along the way. Moreover, I am indebted to Georg Muntingh and to Nikolay Qviller for guiding me over some of the obstacles that I met in this work. Furthermore, I would like to thank Professor Keita Tono for explaining me important details, and Professor Hubert Flenner and Professor Mikhail Zaidenberg for valuable comments and suggestions.

\section{Motivation: the case of plane curves}\label{M}
Let $\Po$ denote the projective plane with coordinates $(x:y:z)$ and coordinate ring $\C[x,y,z]$. A reduced and irreducible curve $C$ on $\Po$ is given as the zero set $\V(F)$ of a homogeneous, reduced and irreducible polynomial $F(x,y,z) \in \C[x,y,z]_d$ for some $d$. In this case, the polynomial $F$ and the curve $C$ is said to have degree $d$.

Plane rational cuspidal curves have been studied quite intensively both classically and the last 20 years. Classically, the study was part of the process of classifying plane curves, and additionally bounds on the number of cusps were produced (see \cite{Clebsch, Lefschetz, Salmon, Telling, Veronese, Wieleitner}). In the modern context, rational cuspidal curves with many cusps play an important role in the study of open surfaces (see \cite{Fent, FlZa94, Wak}), and the study of these curves was further motivated in the mid 1990s by Sakai in \cite{Sakai}, when he suggested two open problems to be solved. The two tasks at hand were first to classify all rational and elliptic cuspidal plane curves, and second to find the maximal number of cusps on a rational cuspidal plane curve. In this article we only deal with the second problem for curves on Hirzebruch surfaces, and we consider the question for curves of any genus. 

The attempts to classify rational cuspidal curves has not been complete, but many curves have been found. In particular, a curve of degree 5 with four cusps was found, and the first mention of this curve that we have found is by Namba in \cite{Namba}. Moreover, three series of rational cuspidal curves with three cusps were constructed by Fenske in \cite{Fen99a}, and Flenner and Zaidenberg \cite{FlZa95,FlZa97}. The lack of examples of rational cuspidal curves with more than four cusps leads to a conjecture, originally proposed by Orevkov.

\begin{conj}
A plane rational cuspidal curve can not have more than four cusps.
\end{conj}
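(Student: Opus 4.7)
The plan is to proceed by contradiction: assume $C \subset \Po$ is a rational cuspidal curve of degree $d$ with $s \geq 5$ cusps $p_1,\ldots,p_s$ having multiplicity sequences $\ol{m}^{(1)},\ldots,\ol{m}^{(s)}$. First, I would combine the classical genus formula $\binom{d-1}{2} = \sum_{i=1}^{s}\delta(\ol{m}^{(i)})$, with $\delta(\ol{m}) = \tfrac{1}{2}\sum_j m_j(m_j-1)$, together with the Pl\"ucker-type identities for the class and inflection number, to produce a tight numerical system linking $d$ to the local invariants of the cusps.

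The key quantitative input would be the logarithmic Bogomolov--Miyaoka--Yau inequality applied to the minimal embedded resolution $\sigma \colon V \to \Po$ with SNC divisor $D = \sigma^{-1}(C)_{\mathrm{red}}$. For $s \geq 2$ the logarithmic Kodaira dimension $\lk$ equals $2$ by results of Wakabayashi and Flenner--Zaidenberg, apart from a short explicit list that can be checked separately. After passing to the almost-minimal model one obtains
\[
(K_V + D)^2 \leq 3\, \bar{e}(V \setminus D),
\]
which, when expanded in terms of the multiplicity sequences, is the inequality exploited by Matsuoka--Sakai, Orevkov, and Tono, and already produces the bound $s \leq 14$ of the main theorem above. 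To push the bound down to $s \leq 4$, I would combine BMY with Orevkov's sharper inequality $d < \alpha\, m_{\max}$ for an explicit $\alpha < 3$, and with the semigroup condition of Fern\'andez de Bobadilla--Luengo--Melle-Hern\'andez--N\'emethi, which forces each conductor to lie in the numerical semigroup generated by the multiplicity sequence of the corresponding cusp.

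The decisive and hardest step is then a combinatorial enumeration: for every tuple $(d;\ol{m}^{(1)},\ldots,\ol{m}^{(s)})$ with $s \geq 5$ satisfying the genus identity and all of the above inequalities, show that one of the arithmetic or semigroup conditions is violated, or that the log canonical ring of $(V,D)$ carries no section of the expected bidegree. The principal obstacle, and the reason the conjecture remains open, is the \emph{unbalanced} regime in which one cusp has multiplicity close to $d$ and the remaining four are small: such configurations are only marginally excluded by BMY and evade the semigroup test. A complete proof will likely require either an extension of the Borodzik--Livingston correction-term method to constrain the Heegaard Floer $d$-invariants of the boundary of a regular neighbourhood of $C$, or a structural argument on the log minimal model of $(V,D)$ ruling out the coexistence of five cusps on a rational curve.
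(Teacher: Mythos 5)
The statement you are trying to prove is stated in the paper as a \emph{conjecture} (due to Orevkov), not as a theorem: the paper offers no proof of it, explicitly records that it is only verified for $d\le 20$ (Piontkowski) and that ``further attempts to prove the conjecture has not succeeded,'' and the best unconditional general bound it cites is Tono's $s\le\frac{21g+17}{2}$, i.e.\ $s\le 8$ for plane rational cuspidal curves. So there is no proof in the paper to compare yours against, and your proposal is not a proof either --- it is a survey of the known partial results together with a research programme, and you say as much yourself when you write that the unbalanced configurations ``evade the semigroup test'' and that a complete proof ``will likely require'' further tools. That admission is the gap: the decisive step, namely the exclusion of every tuple $(d;\ol{m}^{(1)},\ldots,\ol{m}^{(s)})$ with $s\ge 5$ compatible with the genus formula, the log B--M--Y inequality, the Matsuoka--Sakai/Orevkov multiplicity bounds, and the semigroup condition, is not carried out, and it is known that these constraints alone do not suffice --- if they did, the conjecture would not have remained open. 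In particular the regime with one cusp of multiplicity close to $d$, which you correctly identify as the obstruction, is precisely where all of the listed inequalities are simultaneously satisfiable, so ``show that one of the conditions is violated'' is not a step you can execute.

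Two smaller inaccuracies: the bound that log B--M--Y yields for plane rational cuspidal curves after passing to the almost minimal model is $s\le 8$ (Tono), not the $s\le 14$ of the main theorem of this paper, which concerns Hirzebruch surfaces where the Euler characteristic of the complement is $2g+2$ rather than $2g+1$; and Wakabayashi's result gives $\lk=2$ for a rational cuspidal curve with at least \emph{three} cusps (with $\lk\ge 0$ for two cusps), so for $s\ge 2$ you would still need to handle the two-cusp degenerate cases before invoking B--M--Y in the form you state --- harmless here since you assume $s\ge 5$, but worth stating precisely. If you want a genuine proof of the four-cusp bound, the route that eventually succeeded is a structural argument on the log minimal model program for the pair $(V,D)$ in the spirit of your last sentence, not a finite enumeration against the classical inequalities.
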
 

Note that the conjecture is verified for rational cuspidal curves of degree $d \leq 20$ by Piontkowski in \cite{Piontkowski}, and that there is additional supporting evidence by Fenske in \cite{Fent}. 

Further attempts to prove the conjecture has not succeeded, but Tono published in \cite{Tono05} from 2005 a bound for the number of cusps on a plane cuspidal curve of genus $g$. Note that this bound depends only on the genus of the curve. The result says the following \cite[Theorem 1.1, p.216]{Tono05}.
\begin{thm}\label{ONCP2}
The number of cusps $s$ on a cuspidal curve $C$ of genus $g$ on $\Po$ has an upper bound, $$s \leq \frac{21g+17}{2}.$$
\end{thm}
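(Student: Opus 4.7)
The plan is to establish the bound via the logarithmic Bogomolov--Miyaoka--Yau (BMY) inequality applied to the open surface $\Po \setminus C$. First I would take the minimal embedded resolution $\sigma: V \to \Po$ of $C$, producing an SNC pair $(V, D)$ with $D = \sigma^{-1}(C)_{\mathrm{red}}$. Because $C$ is cuspidal, $D$ decomposes into the strict transform $\tilde{C}$ together with the exceptional trees resolving the $s$ cusps; each such tree is completely determined by the multiplicity sequence $\ol{m}^{(j)}$ of the corresponding cusp, so every numerical invariant of $(V, D)$ can be written in terms of $d := \deg C$ and the data $\ol{m}^{(j)}$.

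Next I would split on the logarithmic Kodaira dimension $\lk$. Classification results for open surfaces with $\ol{\kappa} \leq 1$ (due to Kishimoto, Miyanishi--Sugie, Wakabayashi, and others) force any cuspidal $C$ with small $\lk$ into a very restricted list, for which $s$ is well below $(21g+17)/2$ and the bound can be checked directly. The main case is therefore $\lk = 2$. Here I would pass to an almost minimal model $(\ol V, \ol D)$ of $(V, D)$: contract $(-1)$-curves of $V$ that meet $D$ in at most one point, update $D$ accordingly, and iterate until no further such contractions are possible, while recording the Zariski decomposition $K_{\ol V} + \ol D = H + N$ with $H$ nef and $N$ effective with negative-definite intersection matrix on its support.

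With $(\ol V, \ol D)$ in hand, I would apply the logarithmic BMY inequality, which here takes the form $H^2 \leq 3\, e(\ol V \setminus \mathrm{Supp}(\ol D))$ (or the sharper variant incorporating the bark of $\ol D$). Both sides rewrite in terms of $d$, the multiplicity sequences $\ol{m}^{(j)}$, and the bookkeeping from the almost minimalization: the left side descends from $(K_V + D)^2$ using the adjunction formula, and the right side decomposes into the Euler characteristic of $\Po \setminus C$ plus purely combinatorial contributions from the dual graphs of the cusp resolutions. Using the genus formula
\[
2g - 2 \;=\; d(d-3) \;-\; \sum_{j=1}^{s} \sum_{i \geq 0} m_i^{(j)}\bigl(m_i^{(j)} - 1\bigr),
\]
I would eliminate $d$ and exploit that each cusp contributes at least a universal positive amount to $3e - H^2$. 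Aggregating over the $s$ cusps and optimising over admissible multiplicity sequences yields a linear inequality in $s$ and $g$ that reduces to $s \leq (21g+17)/2$.

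The main obstacle is the bookkeeping through the almost minimalisation and the control of the correction $N$ in the Zariski decomposition: one has to verify that the contractions reaching $(\ol V, \ol D)$ do not destroy the estimate, and that the per-cusp lower bound on $3e - H^2$ is sharp enough to yield exactly the constants $21/2$ and $17/2$ rather than weaker ones. This rests on a case analysis of the dual graphs of the cusp resolutions, parametrised by the multiplicity sequences $\ol{m}^{(j)}$, identifying the worst-case configuration that drives the extremal linear combination; it is in this combinatorial optimisation that the precise numerics of the statement are fixed.
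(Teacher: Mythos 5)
Your toolbox is the right one---minimal embedded resolution, logarithmic Kodaira dimension, almost minimalization with the Zariski--Fujita decomposition, and the logarithmic B--M--Y inequality---and this matches the skeleton of Tono's argument, which the paper reproduces almost verbatim for the Hirzebruch case in the proof of Theorem \ref{ONCfe}. But there is a genuine gap at the decisive step. The mechanism is \emph{not} a rewriting of $3e-H^2$ in terms of $d$ and the multiplicity sequences followed by an optimisation over admissible sequences: the final bound depends on neither $d$ nor the multiplicity sequences, and no such optimisation occurs anywhere in the proof. What is actually used is a count of \emph{rational maximal twigs} of the almost minimal SNC divisor: (i) Tono's consequence of B--M--Y (the $\Po$ version of Proposition \ref{Tono44}), which bounds the number $l$ of rational maximal twigs of $D'$ by $l\leq 12\,e(V'\setminus D')+5-3p_a(D')$, the twigs entering through the barks in the Zariski--Fujita decomposition; and (ii) the geometric fact that each cusp forces at least two rational maximal twigs to survive almost minimalization, so that $2s\leq l$. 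Step (ii) is where the dual graph really enters: one must show that neither the last exceptional $(-1)$-curve $E_j$ over each cusp nor the strict transform $\tilde{C}$ is contracted by the almost minimalization morphism $\mu$ (both are branching components once $s\geq 3$), and that $\mu(E_j)$, having self-intersection $\geq -1$, cannot lie on any rational maximal twig, which forces the exceptional tree over each cusp to supply at least two twigs of its own. Your proposal never engages with branching components or maximal twigs, and the ``combinatorial optimisation over multiplicity sequences'' to which you defer the constants $21/2$ and $17/2$ is precisely the part that does not exist in the proof and that I do not see how to carry out.

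Two smaller points. The cases of low logarithmic Kodaira dimension need no classification theorems: for $s\leq 2$ the bound is trivially satisfied, and for $s\geq 3$ Wakabayashi's theorem (the $\Po$ analogue of Theorem \ref{logkfe}) already forces $\ol{\kappa}(\Po\setminus C)=2$. And the Euler characteristic bookkeeping is simpler than you suggest: $e(\Po\setminus C)=2g+1$ by Poincar\'e--Lefschetz duality, and passing to the almost minimal model only decreases it, by the number $n$ of contracted $(-1)$-curves not contained in $D$ (each of which meets $D$ exactly once because the complement is affine); the conclusion is then immediate from $2s\leq l\leq 12(2g+1-n)+5-3g\leq 21g+17$.
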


The proof relies on properties of the dual graph of the minimal embedded resolution of a curve, and on the logarithmic Bogomolov--Miyaoka--Yau-inequality (B--M--Y-inequality). In the latter inequality, computing the logarithmic Kodaira dimension and the topologial Euler characteristic of the complement to the curve is essential.

Note that a similar bound, $s \leq 9$, was found for rational curves by Orevkov and Zaidenberg in \cite{Ore95}, but then under the assumption that the \emph{rigidity conjecture} of Flenner and Zaidenberg hold \cite{FlZa94, FlZa95}.

\section{Notation and preliminary results}\label{PR}
In this section we first recall general facts about curves on Hirzebruch surfaces. Second, we find the Euler characteristic of the complement to a curve, and use this to establish a logarithmic B--M--Y-inequality in this case. Last in this section we state and prove a result on the logarithmic Kodaira dimension of complements to curves on Hirzebruch surfaces that is similar to a result by Wakabayashi for complements of plane curves.

\subsection{A curve on a Hirzebruch surface}
Let $\mathbb{F}_e$ denote the Hirzebruch surface of type $e$ for any $e \geq 0$. Recall that $\Fe$ is a projective ruled surface, with $\Fe=\mathbb{P}(\Os \oplus \Os(-e))$ and morphism $\pi: \mathbb{F}_e \longrightarrow \Pot$. Moreover, $p_a(\mathbb{F}_e)=0$ and $p_g(\mathbb{F}_e)=0$ \cite[Corollary V 2.5, p.371]{Hart:1977}. The Hirzebruch surfaces are rational surfaces, relatively minimal in all cases except $e=1$. Indeed, the surface $\Fen$ is isomorphic to $\Po$ blown up in one point, and it contains an exceptional curve $E \cong \Pot$ with $E^2=-1$.

In the language of divisors, let $\LH$ be a \emph{fiber} of $\pi: \mathbb{F}_e \longrightarrow \Pot$ and $\RH_0$ the \emph{special section} of $\pi$. The Picard group of $\Fe$, $\mathrm{Pic}(\mathbb{F}_e)$, is isomorphic to $\Z \oplus \Z$. We choose $\LH$ and $\RH \sim e\LH+\RH_0$ as generators of ${\rm{Pic}}(\Fe)$, and we then have \cite[Theorem V 2.17, p.379]{Hart:1977} $$\LH^2=0, \qquad \LH \ldot \RH=1, \qquad \RH^2=e.$$ The canonical divisor $K$ on $\Fe$ can be expressed as \cite[Corollary V 2.11, p.374]{Hart:1977} $$K \sim (e-2)\LH-2\RH \; \text{ and }\; K^2=8.$$ Any irreducible curve $C\neq \LH,\RH_0$ corresponds to a divisor given by \cite[Proposition V 2.20, p.382]{Hart:1977} $$C \sim a\LH+b\RH,\quad b>0, \,a\geq 0.$$ The corresponding curve is said to be of type $(a,b)$.

For completion we include the genus formula for cuspidal curves on Hirzebruch surfaces. 
\begin{cor}[Genus formula]\label{genusfe}
A cuspidal curve $C$ of type $(a,b)$ with cusps $p_j$, for $j=1,\ldots,s$, and multiplicity sequences $\ol{m}_j=[m_0,m_1, \ldots, m_{t_j-1}]$ on the Hirzebruch surface $\mathbb{F}_e$ has genus $g$, where $$g=\frac{(b-1)(2a-2+be)}{2}-\sum_{j=1}^s \sum_{i=0}^{t_j-1} \frac{m_i(m_i-1)}{2}.$$
\end{cor}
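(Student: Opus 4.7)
The plan is to combine the adjunction formula for the arithmetic genus of $C$ with the standard expression of the delta invariant of a unibranch singularity in terms of its multiplicity sequence. Since cusps are unibranch, the geometric genus is obtained from the arithmetic genus by subtracting the sum of the local delta invariants at the singular points.

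First I would apply the adjunction formula on $\Fe$, which gives $p_a(C)=\tfrac{1}{2}C\ldot(C+K)+1$. Using the intersection numbers $\LH^2=0$, $\LH\ldot \RH=1$, $\RH^2=e$ from the previous subsection, together with $C\sim a\LH+b\RH$ and $K\sim(e-2)\LH-2\RH$, this is a direct computation: expanding $(a\LH+b\RH)\ldot((a+e-2)\LH+(b-2)\RH)$ and simplifying yields
\beq
p_a(C)=\frac{(b-1)(2a-2+be)}{2}.
\eeq
This accounts for the first term in the claimed formula.

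Next I would invoke the well-known fact that for a unibranch plane curve singularity $p_j$ with multiplicity sequence $\ol{m}_j=[m_0,m_1,\ldots,m_{t_j-1}]$, the delta invariant equals
\beq
\delta_{p_j}=\sum_{i=0}^{t_j-1}\frac{m_i(m_i-1)}{2},
\eeq
which follows by tracking the drop in arithmetic genus under each monoidal transformation in the minimal embedded resolution (this is the classical result one finds, e.g., in Brieskorn's book, cited earlier for multiplicity sequences). Since $C$ is cuspidal, each of its singularities is unibranch, and the normalization map contributes no further correction from multiple branches.

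Finally, I would use the standard relation
\beq
g=p_a(C)-\sum_{j=1}^s \delta_{p_j}
\eeq
between the geometric and arithmetic genus of a reduced, irreducible curve with only unibranch singularities, and substitute the two computations above to arrive at the displayed formula. There is no real obstacle here: the only points requiring care are the signs and coefficients in the intersection-number expansion on $\Fe$ (since $K$ has a mixed sign in the $(\LH,\RH)$ basis), and the correct indexing of the multiplicity sequence so that the last entry $m_{t_j-1}=1$ is included in the sum (it contributes $0$, so the formula is unaffected).
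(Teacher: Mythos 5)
Your proposal is correct and follows essentially the same route as the paper: the paper invokes the general genus formula $g=\tfrac{1}{2}C\ldot(C+K)+1-\sum_j\delta_j$ from Hartshorne, expands the intersection product in the $(\LH,\RH)$ basis to get $\tfrac{(b-1)(2a-2+be)}{2}$, and substitutes $\delta_j=\sum_i\tfrac{m_i(m_i-1)}{2}$ for each cusp, exactly as you do. No further comment is needed.
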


\begin{proof}
Since $C \sim a\LH+b\RH$, $K \sim (e-2)\LH-2\RH$, $L^2=0$, $L \ldot M = 1$ and $M^2=e$, by the general genus formula \cite[Example V 3.9.2, p.393]{Hart:1977}, we have $$g=\frac{(a\LH+b\RH) \ldot (a\LH+b\RH+(e-2)\LH-2\RH)}{2}+1-\sum_{j=1}^s \delta_j,$$ where $\delta$ is the \emph{delta invariant}. This gives
\begin{align*}
g&=\frac{b^2e-2be+ab+be-2a+ab-2b}{2}+1-\sum_{j=1}^s \sum_{i=0}^{t_j-1} \frac{m_i(m_i-1)}{2}\\
&=\frac{(b-1)(2a-2+be)}{2}-\sum_{j=1}^s \sum_{i=0}^{t_j-1} \frac{m_i(m_i-1)}{2}.
\end{align*}
\end{proof}

\hide{
 and $\LH$ and $\RH_0$ can be chosen as generators of this group. We have \cite[Section V 2]{Hart:1977} $$\LH^2=0,\qquad \LH \ldot \RH_0=1, \qquad \RH_0^2=-e.$$ The canonical divisor $K$ can then be expressed as \cite[Corollary V 2.11, p.374]{Hart:1977} $$K \sim -(2+e)\LH -2\RH_0.$$ 

To simplify our calculations, we use another generating set of $\mathrm{Pic}(\Fe)$ \cite[Theorem V 2.17, p.379]{Hart:1977}. Let $\LH$ and $\RH \sim e\LH+\RH_0$ be the new generating set. Then we have $$\LH^2=0, \qquad \LH \ldot \RH=1, \qquad \RH^2=e.$$ Moreover, in this basis, $$K \sim (e-2)\LH-2\RH \; \text{ and }\; K^2=8.$$ Unless otherwise specified, we will use $\LH$ and $\RH$ as a basis for $\mathrm{Pic}(\Fe)$.

Any irreducible curve $C\neq \LH,\RH_0$ corresponds to a divisor on $\mathbb{F}_e$ given by \cite[Proposition V 2.20, p.382]{Hart:1977} $$C \sim a'\LH+b'\RH_0, \quad b'>0,\, a'\geq b'e.$$ Expressing the curve using the preferred generators $\LH$ and $\RH$, we write $$C \sim a\LH+b\RH,\quad b>0, \,a\geq 0,$$ and the corresponding curve is said to be of type $(a,b)$.
}

\subsection{The Euler characteristic and the log B--M--Y-inequality}
In this section we establish a result on the topological Euler characteristic of the complement to a curve $C$ on $\Fe$. In this case we view $C$ and $\Fe$ as real manifolds.
\begin{lem}\label{euler}
Let $C$ be a cuspidal curve of genus $g$ and type $(a,b)$ on $\Fe$. Then $$e(\Fe \setminus C)=2g+2.$$
\end{lem}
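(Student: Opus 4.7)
The plan is to use the additivity of the topological Euler characteristic on the decomposition $\Fe = C \sqcup (\Fe \setminus C)$, namely
\beq
e(\Fe) = e(C) + e(\Fe \setminus C),
\eeq
which is valid for the Euler characteristic with compact supports (or equivalently, from a CW-decomposition of $\Fe$ that respects $C$). Thus it suffices to compute $e(\Fe)$ and $e(C)$ separately.

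For $e(\Fe)$ I would use that $\Fe = \mathbb{P}(\Os \oplus \Os(-e))$ is a $\Pot$-bundle over $\Pot$. Since the Euler characteristic is multiplicative for locally trivial fibrations with simply connected base, $e(\Fe) = e(\Pot) \cdot e(\Pot) = 2 \cdot 2 = 4$. (Alternatively, one can cite $\Fen \cong \mathrm{Bl}_p\Po$ and deformation invariance of $e$ among the $\Fe$, or just exhibit a CW structure with cells in each even dimension $0,2,2,4$.)

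For $e(C)$ the key point is that $C$ is cuspidal, so every singular point is unibranch. Let $\nu: \tilde C \to C$ denote the normalization. Because each cusp has exactly one preimage under $\nu$, the map $\nu$ is a continuous bijection between compact Hausdorff spaces, hence a homeomorphism. Since $\tilde C$ is a smooth compact Riemann surface of genus $g$, this gives $e(C) = e(\tilde C) = 2 - 2g$.

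Combining these,
\beq
e(\Fe \setminus C) = e(\Fe) - e(C) = 4 - (2 - 2g) = 2g + 2,
\eeq
as desired. The only step requiring any real care is the identification $e(C) = e(\tilde C)$, and this is precisely where the cuspidal hypothesis (unibranch singularities) is used; without it, each singular point would contribute extra topology via multiple branches and the formula would pick up a correction term $\sum_j (r_j - 1)$, where $r_j$ is the number of branches at $p_j$. So I expect no serious technical obstacle, only the need to state clearly the additivity property of $e$ being invoked.
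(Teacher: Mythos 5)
Your proof is correct, but it takes a different route from the paper. You use the scissor relation $e(\Fe)=e(C)+e(\Fe\setminus C)$, computing $e(\Fe)=4$ from the $\Pot$-bundle structure and $e(C)=2-2g$ from the fact that a cuspidal (unibranch) curve is homeomorphic to its normalization. The paper instead writes down the long exact cohomology sequence of the pair $(\Fe,C)$ with $\Z$-coefficients, plugs in the known cohomology of $\Fe$ and of $C$ (the latter obtained from exactly the same homeomorphism $C\cong\tilde C$ that you use), and then identifies $\H_i(\Fe\setminus C;\Z)\cong\H^{4-i}(\Fe,C;\Z)$ by Poincar\'e--Lefschetz duality before taking the alternating sum of ranks. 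Your argument is shorter and isolates the one genuinely curve-theoretic input (unibranchness), at the cost of invoking additivity of $e$ for a closed/open decomposition --- which you correctly flag needs either compactly supported Euler characteristics (harmless here, since for complex algebraic varieties, or for even-dimensional manifolds, $e_c=e$) or a CW structure adapted to $C$. The paper's longer route has the side benefit of exhibiting the individual relative cohomology groups, hence in principle more refined topological information about the complement than just its Euler characteristic. Both computations of the two constants agree ($e(\Fe)=1+2+1=4$ and $e(C)=2-2g$), so there is no gap.
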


\begin{proof} 
For the pair $(\Fe,C)$ we have the long exact sequence of cohomology groups

$$
\begin{array}{ccccccc}
0 & \longrightarrow &\H^0(\Fe,C;\Z) &\longrightarrow&\H^0(\Fe;\Z)&\longrightarrow&\H^0(C;\Z)\\
  & \longrightarrow &\H^1(\Fe,C;\Z) &\longrightarrow&\H^1(\Fe;\Z)&\longrightarrow&\H^1(C;\Z)\\
  & \longrightarrow &\H^2(\Fe,C;\Z) &\longrightarrow&\H^2(\Fe;\Z)&\longrightarrow&\H^2(C;\Z)\\
  & \longrightarrow &\H^3(\Fe,C;\Z) &\longrightarrow&\H^3(\Fe;\Z)&\longrightarrow&0\,\\
  & \longrightarrow &\H^4(\Fe,C;\Z) &\longrightarrow&\H^4(\Fe;\Z)&\longrightarrow&0.\\
\end{array}
$$

It is well known that the Hirzebruch surfaces have cohomology groups of the following form,
\begin{align*}
\H^0(\Fe;\Z)&\cong \Z,\\
\H^1(\Fe;\Z)&\cong 0,\\
\H^2(\Fe;\Z)&\cong \Z \oplus \Z,\\
\H^3(\Fe;\Z)&\cong 0,\\
\H^4(\Fe;\Z)&\cong \Z.
\end{align*}

Since a cuspidal curve is homeomorphic to its normalization, we have the following cohomology groups for a cuspidal curve $C$ of genus $g$ (see \cite[Proof of Proposition 1.5.16 pp.42--43]{Fent}),
\begin{align*}
\H^0(C;\Z)&\cong \Z,\\
\H^1(C;\Z)&\cong \Z^{2g},\\
\H^2(C;\Z)&\cong \Z.
\end{align*}

We get the long exact sequence
$$
\begin{array}{ccccccc}
0 & \longrightarrow &\H^0(\Fe,C;\Z) &\longrightarrow&\Z&\longrightarrow&\Z\\
  & \longrightarrow &\H^1(\Fe,C;\Z) &\longrightarrow&0&\longrightarrow&\Z^{2g}\\
  & \longrightarrow &\H^2(\Fe,C;\Z) &\longrightarrow&\Z\oplus \Z&\longrightarrow&\Z\\
  & \longrightarrow &\H^3(\Fe,C;\Z) &\longrightarrow&0&\longrightarrow&0\,\\
  & \longrightarrow &\H^4(\Fe,C;\Z) &\longrightarrow&\Z&\longrightarrow&0.\\
\end{array}
$$

Using Poincaré--Lefschetz duality, we have $\H_i(\Fe \setminus C; \Z) \cong \H^{4-i}(\Fe,C;\Z)$ for $i=0,\ldots,4$. Taking dimensions in the long exact sequence, we find that $e(\Fe \setminus C)=2g+2$.
\end{proof}

Before we state a logarithmic B--M--Y-inequality in this situation, we need to recall some notation and definitions. Note that the logarithmic Kodaira dimension of a non-complete surface $Y$ is denoted by $\ol{\kappa}(Y)$ (see \cite{Fujita, Iitaka}). Now let $V$ be a smooth projective surface, $D$ a reduced SNC-divisor, and $K_V$ the canonical divisor on $V$. If $\ol{\kappa}(V \setminus D)\geq 0$, then there exists a decomposition of $K_V+D$ called the \emph{Zariski--Fujita decomposition} (see \cite[Section 6, pp.527--528]{Fujita}). The decomposition is given by $$K_V+D=H+N,$$ where $H$ and $N$ are $\Q$-divisors, i.e., linear combinations of its prime components with rational coefficients, with the below properties. With $N=\sum n_iN_i$, recall that the intersection matrix $\begin{bmatrix}N_iN_j\end{bmatrix}$ is called negative definite if all its eigenvalues are negative.
\begin{enumerate}[$a)$]
\item $N=0$, or $N$ is an effective $\Q$-divisor with negative definite intersection matrix.
\item $H \ldot C\geq0$ for any effective divisor $C \in \mathrm{Pic}(X)$.
\item $H \ldot N_i=0$ for any prime component $N_i$ of $N$.
\end{enumerate}

The logarithmic B--M--Y-inequality is in \cite{Ore} given in a form that applies to curves on Hirzebruch surfaces, and here we state the inequality as a corollary to this result \cite[Theorem 2.1, p.660]{Ore}.
\begin{cor}\label{cor:BMY}
Let $(V,D)$ be the minimal embedded resolution of a cuspidal curve $C$ of genus $g$ on $\Fe$, and let $K_V$ and $H$ be as in the Zariski--Fujita decomposition.
\begin{enumerate}[$a)$]
\item If $\lkkf\geq 0$, then $$(K_V+D)^2 \leq 3e(\Fe \setminus C)=6g+6.$$ 
\item If $\lkkf=2$, then $$H^2 \leq 3e(\Fe \setminus C)=6g+6.$$
\end{enumerate} 
\end{cor}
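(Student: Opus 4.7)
The plan is to deduce the two inequalities as essentially direct translations of the general logarithmic Bogomolov--Miyaoka--Yau inequality in the form of \cite[Theorem 2.1, p.660]{Ore}, after identifying the relevant Euler characteristic with the one computed in Lemma~\ref{euler}.

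First, I would observe that the morphism $\sigma: V \to \Fe$ provided by the minimal embedded resolution is a composition of monoidal transformations with centers lying over the singular locus of $C$. Consequently, $\sigma$ restricts to a biholomorphism $V \setminus D \xrightarrow{\sim} \Fe \setminus C$. This identification has two immediate consequences I would record: the logarithmic Kodaira dimensions agree, so $\ol{\kappa}(V \setminus D) = \lkkf$, and the topological Euler characteristics agree, so $e(V \setminus D) = e(\Fe \setminus C)$.

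Second, I would invoke \cite[Theorem 2.1, p.660]{Ore} applied to the pair $(V,D)$, which is a smooth projective surface together with a reduced SNC divisor by construction of the minimal embedded resolution. The cited theorem asserts precisely that whenever $\ol{\kappa}(V \setminus D) \geq 0$ one has $(K_V+D)^2 \leq 3 e(V \setminus D)$, and whenever $\ol{\kappa}(V \setminus D) = 2$ one has the stronger statement $H^2 \leq 3 e(V \setminus D)$, where $H$ is the positive part of the Zariski--Fujita decomposition $K_V + D = H + N$. Combining this with the identification from the first step gives the inequalities with $e(\Fe \setminus C)$ on the right-hand side.

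Finally, I would substitute the value $e(\Fe \setminus C) = 2g+2$ obtained in Lemma~\ref{euler}, giving $3 e(\Fe \setminus C) = 6g+6$, which yields both conclusions. There is no real obstacle here: the content is packaged in Lemma~\ref{euler} and the external theorem, and the only subtle point to highlight is that the hypotheses of the cited theorem transfer correctly through $\sigma$, which is immediate because $\sigma$ is an isomorphism on complements.
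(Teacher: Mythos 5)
Your proposal is correct and matches the paper's approach: the paper states this result as an immediate corollary of \cite[Theorem 2.1, p.660]{Ore} applied to the minimal embedded resolution $(V,D)$, combined with the computation $e(\Fe \setminus C)=2g+2$ from Lemma~\ref{euler}, and offers no further argument. Your additional remarks --- that $\sigma$ is an isomorphism on complements, so that $\ol{\kappa}(V\setminus D)=\lkkf$ and $e(V\setminus D)=e(\Fe\setminus C)$ --- simply make explicit the identifications the paper leaves implicit.
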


\subsection{The logarithmic Kodaira dimension}
In this section we establish a result parallel to a theorem by Wakabayashi in \cite{Wak} concerning the logarithmic Kodaira dimension of complements to curves on Hirzebruch surfaces.

\begin{thm}\label{logkfe} 
On a Hirzebruch surface $\Fe$, let $C$ be an irreducible curve of genus $g$ and type $(a,b)$, with $b>2$ and $a>2-\frac{1}{2}be$, $a>0$.
\begin{itemize}
\item[(I)] If $g>0$, then $\lkkf=2$.
\item[(II)] If $g=0$ and $C$ has at least three cusps, then $\lkkf=2$.
\item[(III)] If $g=0$ and $C$ has at least two cusps, then $\lkkf\geq0$. 
\end{itemize}
\end{thm}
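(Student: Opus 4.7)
The plan is to adapt Wakabayashi's proof for $\Po$ in \cite{Wak} to the Hirzebruch setting. Write $\sigma : V \to \Fe$ for the minimal embedded resolution of $C$ with $D = \sigma^{-1}(C)_{\mathrm{red}}$; then $\lkkf = \kappa(V, K_V + D)$, reducing the problem to the Iitaka dimension of the log canonical class. Using the intersection formulae from the previous subsection, $K_\Fe + C \sim (a+e-2)\LH + (b-2)\RH$, or equivalently in the basis $\LH, \RH_0$, $K_\Fe + C \sim (a+(b-1)e-2)\LH + (b-2)\RH_0$. The hypotheses $b > 2$ and $a > 2 - \tfrac{1}{2}be$ make both coefficients strictly positive for $e \geq 0$, so $K_\Fe + C$ is an effective divisor on $\Fe$, with $(K_\Fe + C)^2 = (b-2)(2a+be-4) > 0$; in particular $K_\Fe + C$ is big.

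Next, I would decompose $K_V + D = \sigma^*(K_\Fe + C) + \Delta$, where $\Delta = \sum (d_i + 1 - \mu_i) E_i$ is a $\Z$-divisor supported on the exceptional locus, with $d_i$ the discrepancy of $E_i$ and $\mu_i$ the total multiplicity of $C$ along $E_i$, both computed cusp-by-cusp from the multiplicity sequences $\ol{m}_j$. Since $\sigma^*(K_\Fe + C)$ is the pullback of a big divisor, it has Iitaka dimension two on $V$. For parts (I) and (II), the goal is $\kappa(V, K_V+D) = 2$, which I would establish by showing that $h^0(V, n(K_V+D))$ grows quadratically in $n$: one produces sections of $|n\sigma^*(K_\Fe + C)|$ from the plentiful linear system $|n(K_\Fe + C)|$ on $\Fe$, then matches their vanishing orders against $\Delta$ along every exceptional component. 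In case (I) the genus formula (Corollary \ref{genusfe}) provides enough slack in $|n(K_\Fe + C)|$ to satisfy the cusp-imposed vanishing conditions, while in case (II) the three cusps contribute enough flexibility through their multiplicity sequences to maintain quadratic growth even when $g=0$. For (III) it suffices to produce a single effective section of some $|n(K_V+D)|$, which follows from the effectivity of $K_\Fe + C$ together with the positive contribution of two cusps to the relevant exceptional part.

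The main obstacle will be the local analysis at each cusp, because $\Delta$ is not a priori effective: the coefficient at the first blown-up point of multiplicity $m$ is $2-m$, which is negative for $m \geq 3$. One therefore cannot simply pull back sections from $\Fe$ but must compare, along the entire resolution tree of each cusp, the vanishing orders of pulled-back sections to the (mixed-sign) coefficients of $\Delta$. The combinatorial input required is an inequality, for each cusp, between the total contribution $\sum(m_i - 1)$ from the multiplicity sequence and the length of the resolution tree, exactly as in Wakabayashi's bookkeeping for $\Po$. Translating these estimates to $\Fe$ requires tracking the extra parameter $e$, and the inequalities one must verify reduce precisely to the standing hypothesis $a > 2 - \tfrac{1}{2}be$, which plays the role of $d \geq 4$ in the planar case.
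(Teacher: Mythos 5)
Your overall strategy --- reduce to the bigness of $K_{\Fe}+C\sim(a+e-2)\LH+(b-2)\RH$ and then fight the non-effective exceptional correction $\Delta$ --- is the right one, and you correctly identify where the difficulty lies; but the mechanism you propose for overcoming it fails, and the one idea that actually makes Wakabayashi's argument work is missing. You propose to get quadratic growth of $\h^0\bigl(n(K_V+D)\bigr)$ by taking sections of $|n(K_{\Fe}+C)|$ and matching their vanishing orders against $n\Delta$. Since the negative coefficients of $n\Delta$ scale with $n$, the number of imposed linear conditions grows like $n^2$, and the resulting lower bound on $\h^0$ has leading coefficient
$$\frac{(b-2)(2a+be-4)}{2}-\sum_i\frac{(m_{i-1}-1)^2}{2}\;=\;g+\frac{1}{2}\sum_i(m_{i-1}-1)-\Bigl(a+b+\tfrac{1}{2}be-3\Bigr),$$
which is \emph{not} controlled by the genus formula and can be negative even when $g\geq 1$ (already for type $(3,3)$ on $\mathbb{F}_0$ with a single cusp of multiplicity $3$ one gets $g=1$ but leading coefficient $1-2=-1$). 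So the claim that ``the genus formula provides enough slack'' for the $n$-scaled vanishing conditions is false, and cases (I) and (II) are not established by your argument.

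The missing idea is that the genus bound is spent exactly once, not once per multiple. Lemma \ref{lemkfe} with $\hat a=a+e-2$, $\hat b=b-2$, $n_i=m_{i-1}-1$ has lower bound exactly $g$, so $g\geq1$ yields a \emph{single} effective $G$ with $(a+e-2)\LH+(b-2)\RH\sim\sum(m_{i-1}-1)E_i+G$. One then writes, for a fixed large $k$,
$$k(D+K_V)\sim(a+e-2)\LH+(b-2)\RH+(k-1)G+k\sum_{i}E_i'-\sum_{i}(m_{i-1}-1)E_i,$$
and the crucial asymmetry is that the negative part keeps coefficients independent of $k$ while $k\sum E_i'$ scales with $k$ and each total transform $E_i$ is a nonnegative combination of the $E_j'$; hence the tail is effective for $k\gg0$. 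Quadratic growth then comes from Riemann--Roch applied on $V$ to $n\lambda\bigl((a+e-2)\LH+(b-2)\RH\bigr)$ with \emph{no} vanishing conditions at all (Proposition \ref{prpkfe}, where the only work is showing $\h^0\bigl(K_V-n\lambda(C+K)\bigr)=0$ by intersecting with the ample $\LH+\RH$). You would also need to make precise the refinement for $g=0$: the divisor $G$ above does not exist then, and one must instead impose order $m_{\qs-1}-2$ at the index $\qs$ where the branch becomes smooth, plus order $1$ along $E_{\qs+1},\dots,E_{t-2}$, so that Lemma \ref{lemkfe} again gives a nonzero space; the identity $E_{\qs}=E_{\qs}'+E_{\qs+1}+\cdots+E_t$ and one such divisor per cusp (three for (II), two for (III)) then finish the argument. ``The cusps contribute enough flexibility'' is a placeholder for precisely this computation.
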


\noi We prove this theorem closely following the proof given by Wakabayashi in \cite{Wak} for the parallel theorem for curves on the projective plane, replacing only the details for $\Po$ with the corresponding details for $\Fe$ where necessary. Note that the proof goes through without essential changes, but that we have different indices in some parts of the proof (cf. \cite{Wak}).

We start by recalling the essential definitions. Let $\LH$ and $\RH$ denote the set of generators of $\mathrm{Pic}(\Fe)$ described above. Let $\sigma:V \longrightarrow \Fe$ be a finite sequence of monoidal transformations, $$V=V_t \xrightarrow{\sigma_t} V_{t-1} \xrightarrow{} \cdots \xrightarrow{} V_1 \xrightarrow{\sigma_1} V_0=\Fe.$$ In short, $$\sigma=\sigma_1 \circ \cdots \circ \sigma_t: V \rightarrow \Fe.$$ Each transformation $\sigma_i$ has exceptional divisor $E_i \subset V_{i}$ and is centered in $p_{i-1} \in V_{i-1}$. Let $E_i^{\prime}$ denote the strict transform of $E_i$ by $\sigma_{i+1}\circ \cdots \circ \sigma_t$. By abuse of notation, we also use the symbol $E_i$ for $(\sigma_{i+1}\circ \cdots \circ \sigma_t)^{*}E_i$, $\RH$ for $\sigma^{*}\RH$ and $\LH$ for $\sigma^{*}\LH$. 

Before giving the proof of the theorem, we need a lemma and a proposition. The formulation and proofs of these are simply adjustments to the ones found in \cite{Wak}.

\begin{lem}\label{lemkfe}
Let $\sigma: V \rightarrow \Fe$, $\RH$, $\LH$ and $E_i$ be as above. For any $\hat{a},\hat{b} \in \mathbb{N},\,n_i \in \mathbb{N} \cup \{0\}$ we have
$$\dim \H^0\Bigl(V,\Os\bigl(\hat{a}\LH+\hat{b}\RH-\sum_{i=1}^tn_iE_i\bigr)\Bigr)\geq \frac{(\hat{b}+1)(2\hat{a}+2+\hat{b}e)}{2}-\sum_{i=1}^{t}\frac{n_i(n_i+1)}{2}.$$
\end{lem}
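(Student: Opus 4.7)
The plan is to combine Riemann--Roch on $\Fe$ with the standard linear-algebra observation that prescribing vanishing of order $n$ at a smooth point of a surface imposes at most $\frac{n(n+1)}{2}$ linear conditions on a linear system, and to iterate the latter through the sequence of monoidal transformations $\sigma_i$.

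For the base case on $\Fe$ itself, I would use $\LH^2=0$, $\LH\ldot\RH=1$, $\RH^2=e$, and $K\sim(e-2)\LH-2\RH$ together with Riemann--Roch to compute
$$\chi\bigl(\Os_{\Fe}(\hat{a}\LH+\hat{b}\RH)\bigr)=\frac{(\hat{b}+1)(2\hat{a}+2+\hat{b}e)}{2}.$$
Serre duality identifies $h^2$ with $h^0(K-\hat{a}\LH-\hat{b}\RH)$, and since the divisor $(e-2-\hat{a})\LH-(\hat{b}+2)\RH$ has strictly negative intersection with the fibre class $\LH$, it cannot be effective. Hence $h^2=0$ and $h^0\geq\chi$.

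For the inductive step I would set $D_0:=\hat{a}\LH+\hat{b}\RH$ on $V_0=\Fe$ and $D_i:=\sigma_i^{*}D_{i-1}-n_iE_i$ on $V_i$. Using the projection formula together with $(\sigma_i)_{*}\Os_{V_i}(-n_iE_i)=\mathfrak{m}_{p_{i-1}}^{n_i}$ and $(\sigma_i)_{*}\sigma_i^{*}\Os(D_{i-1})=\Os(D_{i-1})$, one identifies
$$H^0(V_i,D_i)\cong\bigl\{s\in H^0(V_{i-1},D_{i-1}):\mathrm{mult}_{p_{i-1}}(s)\geq n_i\bigr\}.$$
Since $p_{i-1}$ is a smooth point of $V_{i-1}$, the quotient $\Os_{V_{i-1},p_{i-1}}/\mathfrak{m}_{p_{i-1}}^{n_i}$ has $\C$-dimension $\tfrac{n_i(n_i+1)}{2}$, so the vanishing condition cuts the dimension of $H^0$ by at most this much, giving
$$h^0(V_i,D_i)\geq h^0(V_{i-1},D_{i-1})-\frac{n_i(n_i+1)}{2}.$$
Telescoping this inequality over $i=1,\ldots,t$ and substituting the base-case estimate yields exactly the bound asserted.

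I do not anticipate a substantial obstacle here. The only point that requires care is the projection-formula identification in the inductive step, but each centre $p_{i-1}$ is automatically a smooth point of $V_{i-1}$ by the standing assumption that every $V_i$ in the sequence is a smooth surface, so the argument should carry over verbatim from the $\Po$-case in \cite{Wak}.
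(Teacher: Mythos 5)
Your proof is correct and follows essentially the same route as the paper, which simply defers to Wakabayashi's lemma: the local step (each point of multiplicity $n_i$ imposes at most $\frac{n_i(n_i+1)}{2}$ conditions, telescoped through the monoidal transformations) is identical, and your base count $\frac{(\hat{b}+1)(2\hat{a}+2+\hat{b}e)}{2}$ agrees with the dimension of the space of bihomogeneous forms of bidegree $(\hat{a},\hat{b})$ that the paper cites from Laface. The only cosmetic difference is that you obtain the base case via Riemann--Roch plus vanishing of $h^0(K_{\Fe}-\hat{a}\LH-\hat{b}\RH)$ (correctly, since that divisor meets the nef fibre class negatively) rather than by directly counting monomials.
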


\begin{proof}
Most of the proof of \cite[Lemma, p.157]{Wak} goes unchanged, since it only concerns local properties of points. A calculation of the dimension of the vector space of polynomials of bigrading $(\hat{a},\hat{b})$ can be found in \cite[Proposition 2.3, p.129]{Laface}.
\end{proof}

Let $C$ be an irreducible curve on $\Fe$ of type $(a,b)$. Let $\sigma:V \rightarrow{} \Fe$ be the minimal embedded resolution of its singularities, such that its reduced inverse image $D$ is an SNC-divisor. Let $C_i$ denote the strict transform of $C$ by $\sigma_{1} \circ \cdots \circ \sigma_i$, and let $m_i$ be the multiplicity of $p_i$ on $C_i$. Let $\tilde{C}$ denote the strict transform of $C$ by $\sigma$. Finally, let $K_V$ denote the canonical divisor on $V$. Then with the sloppy notation introduced above, we have
\begin{eqnarray*}
D&=&\tilde{C}+\sum_{i=1}^tE_i^{\prime},\\
K_V&\sim& (e-2)\LH-2\RH+\sum_{i=1}^tE_i,\\
a\LH+b\RH &\sim& C =\tilde{C}+\sum_{i=1}^t m_{i-1}E_i.
\end{eqnarray*}

\noi Hence, \begin{equation}\displaystyle{D+K_V\sim(a+e-2)\LH+(b-2)\RH+\sum_{i=1}^tE_i^{\prime}-\sum_{i=1}^t(m_{i-1}-1)E_i}.\label{dkfe}\end{equation}

\begin{prp} \label{prpkfe}
With $C$, $a$, $b$, $D$, $K_V$, $\LH$ and $\RH$ as above, suppose that for sufficiently large $k \in \N$ \begin{equation}\lambda k (D+K_V) \sim \lambda \Bigl((a+e-2)\LH+(b-2)\RH \Bigr)+G_k,\label{lineqfe}\end{equation}
where $\lambda$ is a suitable positive number independent of $k$, and $G_k$ is a suitable non-negative divisor on $V$ dependent on $k$. Then $\lkkf=2$.
\end{prp}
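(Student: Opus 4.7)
The plan is to establish bigness of $K_V + D$ on $V$, which by definition of the logarithmic Kodaira dimension gives $\ol{\kappa}(\Fe \setminus C) = 2$. The strategy is to multiply the hypothesized linear equivalence \eqref{lineqfe} by a positive integer $n$, and then invoke Lemma \ref{lemkfe} to produce quadratic growth in $n$ for the plurigenera $h^0(V, \Os(m(D+K_V)))$ along an arithmetic progression of values of $m$.

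First, I would fix a sufficiently large $k$ for which the hypothesis holds and scale \eqref{lineqfe} by $n \in \N$ to get
$$n\lambda k(D+K_V) \sim n\lambda\bigl((a+e-2)\LH + (b-2)\RH\bigr) + nG_k.$$
Since $nG_k$ is effective, multiplication by its canonical section yields an injection
$$H^0\bigl(V, \Os(n\lambda((a+e-2)\LH + (b-2)\RH))\bigr) \hookrightarrow H^0\bigl(V, \Os(n\lambda k(D+K_V))\bigr).$$
Applying Lemma \ref{lemkfe} with $\hat{a} = n\lambda(a+e-2)$, $\hat{b} = n\lambda(b-2)$ and all $n_i = 0$ (after clearing denominators in $\lambda$ if necessary), the dimension of the left-hand space is bounded below by the polynomial
$$\frac{(n\lambda(b-2)+1)\bigl(2n\lambda(a+e-2) + 2 + n\lambda(b-2)e\bigr)}{2},$$
whose leading coefficient in $n$ simplifies to $\lambda^2(b-2)\bigl(a + \tfrac{1}{2}be - 2\bigr)$.

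Under the standing hypotheses $b > 2$ and $a > 2 - \tfrac{1}{2}be$, both factors of this leading coefficient are strictly positive, so the bound grows quadratically in $n$. Setting $m = n\lambda k$ shows $h^0(V, \Os(m(D+K_V))) \geq Cm^2$ along the arithmetic progression $m \in \lambda k \N$, whence $\kappa(V, K_V + D) = 2$ and $\ol{\kappa}(\Fe \setminus C) = 2$.

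The main technical point is confirming positivity of the leading coefficient of the polynomial bound, which reduces precisely to the numerical hypotheses of Theorem \ref{logkfe}; once this is checked, the rest of the argument is bookkeeping. A minor subtlety is that $\lambda$ may be rational, but this is harmless, since replacing $\lambda$ by $N\lambda$ for a suitable $N \in \N$ only shifts the arithmetic progression on which quadratic growth is observed, without affecting the conclusion $\kappa(V, K_V + D) = 2$.
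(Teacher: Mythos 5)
Your proposal is correct and follows the same overall strategy as the paper's proof: fix a $k$ for which (\ref{lineqfe}) holds, scale by $n$, discard the effective part $nG_k$ to get $\h^0\bigl(n\lambda k(D+K_V)\bigr)\geq \h^0\bigl(n\lambda((a+e-2)\LH+(b-2)\RH)\bigr)$, and show the right-hand side grows like $\tfrac{1}{2}n^2\lambda^2(b-2)(2a+be-4)$, whose positivity is exactly the standing hypotheses $b>2$ and $a>2-\tfrac{1}{2}be$. The one genuine difference is how that quadratic lower bound is produced. The paper applies Riemann--Roch on $V$ directly to $n\lambda\bigl((a+e-2)\LH+(b-2)\RH\bigr)$ and must then kill the correction term $\h^0\bigl(K_V-n\lambda((a+e-2)\LH+(b-2)\RH)\bigr)$, which it does by pairing a putative effective divisor in that class with the ample class $\LH+\RH$ and deriving a contradiction. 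You instead invoke Lemma \ref{lemkfe} with all $n_i=0$; since the divisor is then just the pullback $\sigma^*(\hat{a}\LH+\hat{b}\RH)$ and $\sigma_*\Os_V=\Os_{\Fe}$, the lemma hands you the same polynomial bound with no vanishing argument needed. Your route is shorter, at the cost of needing the scaled coefficients $n\lambda(a+e-2)$ and $n\lambda(b-2)$ to be nonnegative integers --- you correctly flag the clearing of denominators in $\lambda$, and $a+e-2\geq 0$ follows from the hypotheses --- whereas the paper's Riemann--Roch computation is indifferent to integrality. Both arguments conclude identically by reading $\kappa(V,K_V+D)=2$ off the quadratic growth of $\h^0\bigl(m(D+K_V)\bigr)$ along an arithmetic progression in $m$.
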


\begin{proof}
Choose a $k$ such that (\ref{lineqfe}) holds. Since $G_k$ is non-negative, then for any $n \in \N$ we have $$\dim \H^0\Bigl(V,\Os\bigl(n \lambda k (D+K_V)\bigr)\Bigr)\geq \dim \H^0\Bigl(V, \Os\bigl(n \lambda ((a+e-2)\LH+(b-2)\RH)\bigr)\Bigr).$$ For readability, we will use an even more sloppy notation and write $C+K$ instead of $(a+e-2)\LH+(b-2)\RH$. By Riemann--Roch \cite[Theorem V 1.6, p.362]{Hart:1977}, we have that 

\footnotesize{
\begin{align*}
\h^0\Bigl(n& \lambda \bigl(C+K\bigr)\Bigr)-\h^1\Bigl(n \lambda \bigl(C+K\bigr)\Bigr)+\h^0\Bigl(K_V-n \lambda \bigl(C+K\bigr)\Bigr)\\
&=\frac{1}{2}n \lambda \bigl((a+e-2)\LH+(b-2)\RH\bigr) \ldot \Bigl(n \lambda \bigl((a+e-2)\LH+(b-2)\RH \bigr)-K_V\Bigr)+1+p_a(V).
\end{align*}
}

\noi \normalsize{Rewriting this equation, using that $p_a(V)=p_a(\Fe)=0$ and $\h^1\Bigl(n \lambda (C+K)\Bigr) \geq 0$, we find that}
\footnotesize{
\begin{eqnarray*}
\h^0 \Bigl(n \lambda (C+K)\Bigr)
&=&\frac{1}{2}n \lambda \bigl((a+e-2)\LH+(b-2)\RH\bigr) \ldot \Bigl(n \lambda \bigl((a+e-2)\LH+(b-2)\RH\bigr)-K_V\Bigr)\\
&&+\;1+\h^1 \Bigl(n \lambda (C+K)\Bigr)-\h^0 \Bigl(K_V-n \lambda (C+K)\Bigr)\\
&\geq & \frac{1}{2}n \lambda \bigl((a+e-2)\LH+(b-2)\RH\bigr) \ldot \Bigl(n \lambda \bigl((a+e-2)\LH+(b-2)\RH \bigr)-K_V\Bigr)\\
&&- \;\h^0 \Bigl(K_V-n \lambda (C+K)\Bigr)\\
&\geq & \frac{1}{2}n^2 \lambda^2 (b-2)(2a+be-4)+\frac{1}{2}n \lambda (2a+2b+be-8)\\
&&-\;\h^0 \Bigl(K_V-n \lambda (C+K)\Bigr)\\
&\geq & \frac{1}{2}n^2 \lambda^2 (b-2)(2a+be-4)-\h^0 \Bigl(K_V-n \lambda (C+K)\Bigr).
\end{eqnarray*}}

\normalsize 
\noi Next we show that $$\h^0 \Bigl(K_V-n \lambda ((a+e-2)\LH+(b-2)\RH)\Bigr) \leq 0.$$ Assume to the contrary that on $V$ there exists a positive divisor \begin{equation}P \sim K_V-n \lambda \bigl((a+e-2)\LH+(b-2)\RH\bigr)\label{GLeff}.\end{equation} Then $\sigma(P)$ must be effective. Considering $\Fe$ as a toric variety, the divisor $\LH+\RH$ is in the interior of the nef cone, hence it is ample (see \cite[Example 6.1.16, p.273]{COX}). Therefore, $\sigma(P) \ldot (\LH+\RH)\geq 0$. Because of (\ref{GLeff}) and the conditions on $a$ and $b$, we have that
%$$\h^0 \Bigl(K_V-n \lambda (C+K)\Bigr)\leq 0,$$ nore precisely 

\begin{eqnarray*}
P \ldot \sigma^{-1}(\LH+\RH)&=&\sigma(P) \ldot (\LH+\RH)\\
&=&\bigl((e-2)\LH-2\RH -n \lambda (a+e-2)\LH -n\lambda(b-2)\RH\bigr) \ldot (\LH+\RH)\\
&=&-e-4-n\lambda \bigl(a+b-4+e(b-1)\bigr)\\
&<&0.
\end{eqnarray*}

\noi This is a contradiction to the above assumption, hence 
$$\h^0 \Bigl(n \lambda \bigl((a+e-2)\LH+(b-2)\RH\bigr)\Bigr) \geq n^2 \lambda^2 \frac{(b-2)(2a+be-4)}{2}.$$
In other words $$\dim \H^0\Bigl(V,\Os\bigl(n \lambda \bigl((a+e-2)\LH+(b-2)\RH \bigr)\bigr)\Bigr) \geq c\cdot n^2$$ for a suitable constant $c>0$ independent of $n$. Note that $c > 0$ because of the conditions on $a$ and $b$. By definition of the logarithmic Kodaira dimension, we then have $\lkkf=2$.
\end{proof}

We now prove Theorem \ref{logkfe} in the same way that Wakabayashi proves the result for curves on $\Po$ in \cite{Wak}. 
\begin{proof}[Proof of Theorem \ref{logkfe}]
{\ \\}
\smallskip
\noi \emph{Case (I)}. Let $C$ be an irreducible curve with $g(C)\geq1$, $b>2$ and $a>2-\frac{1}{2}be$, $a>0$. The genus formula ensures that $$g(C)=\frac{(b-1)(2a-2+be)}{2}-\sum_{i=0}^{t-1}\frac{m_i(m_i-1)}{2}\geq1.$$
With $\hat{a}=a+e-2$, $\hat{b}=b-2$, and $n_i=m_{i-1}-1$ in Lemma \ref{lemkfe}, we get 
$$\dim \H^0\Bigl(V,\Os\bigl((a+e-2)\LH+(b-2)\RH-\sum_{i=1}^t(m_{i-1}-1)E_i\bigr)\Bigr) \geq 1.$$
Hence, the below vector space is non-zero,
$$\H^0\Bigl(V,\Os\bigl((a+e-2)\LH+(b-2)\RH - \sum_{i = 1}^t(m_{i-1}-1)E_i \bigr) \Bigr)\neq 0.$$ Therefore, $$(a+e-2)\LH+(b-2)\RH \sim \sum_{i=1}^{t}(m_{i-1}-1)E_i+G,$$ where $G$ is a positive divisor on $V$. This implies that 
{\small{
\begin{eqnarray*}
k(D+K_V)& \sim &(a+e-2)\LH+(b-2)\RH+(k-1)\Bigl((a+e-2)\LH+(b-2)\RH\Bigr)\\
 &&+ \;k\sum_{i=1}^tE_i^{\prime}-k\sum_{i=1}^t(m_{i-1}-1)E_i\\
&\sim &(a+e-2)\LH+(b-2)\RH +(k-1)G + k\sum_{i=1}^tE_i^{\prime}-\sum_{i=1}^t(m_{i-1}-1)E_i.
\end{eqnarray*}}}\normalsize
\noi Each $E_i$, that is each $(\sigma_{i+1}\circ \cdots \circ \sigma_t)^*E_i$, is a linear combination of the strict transforms $E_j^{\prime}$, $j \geq i$, so for large $k$ the latter three terms in the above sum constitute a non-negative divisor. Hence, we can use Proposition \ref{prpkfe}, with $\lambda=1$, to conclude that $\lkkf=2$.\\
\smallskip

\noi \emph{Case (II)}. Let $C$ be a rational cuspidal curve on $\Fe$. We first assume that $C$ has only one cusp. Let $\qs$ denote the index with the property that $p_{\qs-1}$ is singular on $C_{\qs-1}$ and $p_{\qs}$ is non-singular on $C_{\qs}$. As before, we let $t$ be the number of monoidal transformations such that $D$ is the minimal embedded resolution of $p$ on $C$. We write 
\begin{eqnarray} \label{eienefe}
E_{\qs}&= &E_\qs'+E_{\qs+1}+ \cdots +E_t,\\
E_{t-1}&=&E_{t-1}'+E_t',\notag\\
t-\qs&=&m_{\qs-1}.\notag
\end{eqnarray} 

\noi Using the strategy from \cite{Wak}, we first look at the following vector space,
\footnotesize{\begin{equation}\label{peculiarfe} \displaystyle{\H^0\Bigl(V,\Os\bigl((a+e-2)\LH+(b-2)\RH - \sum_{i \neq \qs}(m_{i-1}-1)E_i-(m_{\qs-1}-2)E_\qs-E_{\qs+1}- \cdots - E_{t-2} \bigr)\Bigr)},\end{equation}}\normalsize and show that this vector space is non-zero. Changing the index in the genus formula gives $$\displaystyle g(C)=\frac{(b-1)(2a-2+be)}{2}-\frac{1}{2}\sum_{i=1}^{t}m_{i-1}(m_{i-1}-1)=0.$$
Rewriting this expression, we have
\footnotesize{
\begin{align*}
\displaystyle \frac{(b-1)(2a-2+be)}{2}-\frac{1}{2}\sum_{i\neq \qs}m_{i-1}(m_{i-1}-1)-\frac{1}{2}(m_{\qs-1}-1)(m_{\qs-1}-2)-(m_{\qs-1}-2)=1.
\end{align*}}
\normalsize
Using Lemma \ref{lemkfe}, we conclude that the vector space in (\ref{peculiarfe}) above is non-zero.

This implies that we may write 
\small{$$\displaystyle (a+e-2)\LH+(b-2)\RH \sim \sum_{i \neq \qs}(m_{i-1}-1)E_i+(m_{\qs-1}-2)E_\qs+E_{\qs+1}+\cdots +E_{t-2}+G_p,$$}\normalsize where $G_p$ is a positive divisor. 

The latter observation can be used together with (\ref{eienefe}) to get an expression for $k(D+K_V)$.
\small{
\begin{flalign*}
\qquad k(D+K_V) \sim&\;  (a+e-2)\LH+(b-2)\RH+(k-1)\bigl((a+e-2)\LH+(b-2)\RH\bigr)\notag&\\
	    & +{}k\sum_{i=1}^tE_i'-k\sum_{i=1}^t(m_{i-1}-1)E_i, &
\end{flalign*}
\begin{flalign*}
\qquad k(D+K_V) \sim & \;(a+e-2)\LH+(b-2)\RH &\\
	     &+{}(k-1)\Bigl(\sum_{i \neq \qs}(m_{i-1}-1)E_i+(m_{\qs-1}-2)E_\qs+E_{\qs+1}+\cdots +E_{t-2}+G_p\Bigr)\notag&\\
 	     &+{}k\sum_{i=1}^tE_i'-k\sum_{i=1}^t(m_{i-1}-1)E_i,&
\end{flalign*}
\begin{flalign*}
\qquad k(D+K_V)\sim & \;(a+e-2)\LH+(b-2)\RH +(k-1)G_p\notag&\\
	     &-{} \sum_{i \neq \qs}(m_{i-1}-1)E_i -k(m_{\qs-1}-1)E_\qs+(k-1)(m_{\qs-1}-2)E_{\qs}\notag&\\
	     &+{}(k-1)\left(E_{\qs+1}+\cdots +E_{t-2}\right)+k\sum_{i=1}^tE_i',&
\end{flalign*}
\begin{flalign*}
\qquad k(D+K_V)\sim & \;(a+e-2)\LH+(b-2)\RH+(k-1)G_p - \sum_{i =1}^t(m_{i-1}-1)E_i &\notag\\
             &-{}(k-1)E_\qs+(k-1)\left(E_{\qs+1}+\cdots +E_{t-2}\right)+k\sum_{i=1}^tE_i',&
\end{flalign*}
\begin{flalign*}
\qquad k(D+K_V) \sim & \;(a+e-2)\LH+(b-2)\RH+(k-1)G_p - \sum_{i =1}^t(m_{i-1}-1)E_i &\notag\\
	     &+(k-1)\left(-E_\qs+E_{\qs+1}+\cdots +E_{t-2}\right)+k\sum_{i=1}^tE_i',&
\end{flalign*}
\begin{flalign*}
\qquad k(D+K_V) \sim & \;(a+e-2)\LH+(b-2)\RH+(k-1)G_p - \sum_{i =1}^t(m_{i-1}-1)E_i&\notag\\
 &+(k-1)\left(-E_\qs'-E_{t-1}-E_t\right) +k\sum_{i=1}^tE_i',&
\end{flalign*}
\begin{flalign*}
\qquad k(D+K_V) \sim & \;(a+e-2)\LH+(b-2)\RH+(k-1)G_p \notag\\
&- \sum_{i =1}^t(m_{i-1}-1)E_i +k\sum_{i=1}^tE_i'-(k-1)\left(E_\qs'+E_{t-1}'+2E_t'\right)\label{kDKfe}&
\end{flalign*}
}\normalsize

Then we make the assumption that $C$ has three cusps, $p_1$, $p_2$ and $p_3$. Note that the following procedure also works if we assume that $C$ has more than three cusps. We perform successive minimal embedded resolutions of the cusps, and take one cusp at the time until we reach $V$. Let $\hat{t}_j$ denote the number of monoidal transformations needed to resolve the cusps $p_1, \ldots, p_j$, but not $p_{j+1}, \ldots$, $j=1,2,3$. To resolve the three singularities in such a way that $D$ is an SNC-divisor, we must apply in total $t:=\hat{t}_3$ successive monoidal transformations to the curve. We let $\hat{\qs}_j$ denote the smallest index such that the cusps $p_1,\ldots,p_{j-1}$ are resolved and that in the process of resolving $p_j$, the curve $C_{\hat{\qs}_j-1}$ is singular at $p_{\hat{\qs}_j-1}$, but $C_{\hat{\qs}_j}$ is non-singular at $p_{\hat{\qs}_j}$. For each cusp $p_j$ we have that $\hat{t}_j=\hat{\qs}_j+m_{\hat{\qs}_j-1}$.

The minimal embedded resolution of the curve can be viewed in three different ways, and we use this to find three positive divisors $G_{p,j}$ and similar expressions to the above for $k(D+K_V)$ on the surface $V$. Note that we now sum up to $t$. For each $j$ we may write
\small{
\begin{align*}k(D+K_V)\sim& \;(a+e-2)\LH+(b-2)\RH\\
&+(k-1)G_{p,j}- \sum_{i =1}^t(m_{i-1}-1)E_i\\ 
&+k\sum_{i=1}^tE_i'-(k-1)\left(E_{\hat{\qs}_j}'+E_{\hat{t}_j-1}'+2E_{\hat{t}_j}'\right).\end{align*}
}\normalsize

\noi We then add the three expressions and get
\small{
\begin{align*}3k(D+K_V)\sim&\;3\bigl((a+e-2)\LH+(b-2)\RH\bigr)\\
&+(k-1)\sum_{j=1}^3G_{p,j} - 3\sum_{i=1}^t(m_{i-1}-1)E_i \\
&+3k\sum_{i=1}^tE_i'-(k-1)\sum_{j=1}^3\left(E_{\hat{\qs}_j}'+E_{\hat{t}_j-1}'+2E_{\hat{t}_j}'\right).
\end{align*}}\normalsize

\noi The latter two lines of the sum constitutes a non-negative divisor for large $k$. The conclusion then follows by Proposition \ref{prpkfe}, and we have $\lkkf=2$.\\
\smallskip

\noi \emph{Case (III)}.
If $C$ has two cusps $p_1$ and $p_2$, then as in Case (II) we can look at each cusp separately and find two expressions on the form
$$\displaystyle (a+e-2)\LH+(b-2)\RH \sim \sum_{i \neq \qs}(m_{i-1}-1)E_i+(m_{\qs-1}-2)E_{\qs}+E_{{\qs}+1}+\cdots +E_{{t}-2}+G_{p,j},$$ where $G_{p,j}$ is a positive divisor for each $j=1,2$. 

By performing the blowing-ups of the cusps successively, with the same indices as in Case (II), we can use (\ref{dkfe}),
$$D+K_V \sim \sum_{i=1}^t E'_i+G_{p,j}-E_{\hat{\qs}_j}+E_{\hat{\qs}_j+1}+\cdots + E_{{\hat{t}_j}-2}.$$ Summing these expressions, we get
$$2(D+K_V) \sim 2\sum_{i=1}^t E_i'+\sum_{j=1}^2G_{p,j}+\sum_{j=1}^2(-E_{\hat{\qs}_j}+E_{\hat{\qs}_j+1}+\cdots + E_{{\hat{t}_j}-2}).$$ Using (\ref{eienefe}), we then get
$$2(D+K_V) \sim 2\sum_{i=1}^t E_i'+\sum_{j=1}^2G_{p,j}-\sum_{j=1}^2(E'_{\hat{\qs}_j}+E'_{\hat{t}_j-1}+2E'_{\hat{t}_j}).$$
The right hand side is a positive divisor, hence $$\H^0\Bigl(V,\Os\bigl(2(D+K_V)\bigr)\Bigr) \neq 0.$$ It follows that $ \lkkf \geq 0$.
\end{proof}

\section{On the number of cusps}\label{ON}
In this section we find an upper bound for the number of cusps on a cuspidal curve on a Hirzebruch surface. This result is a modification of Theorem \ref{ONCP2} and its proof by Tono \cite{Tono05}, and essentially everything in the proof goes unchanged. We include the proof here for the sake of completion.

We first recall a few preliminary definitions and results needed in the proof. Let $D$ be a reduced effective SNC-divisor on a nonsingular projective surface $V$. We write $D$ as the sum of its irreducible components $D_i$, that is, $D=D_1+\ldots+ D_r$. 

We have from \cite{Fujita, Tsunoda, Tono05} a number of important notions related to $D$. We define the \emph{branching number} of $D_i$, $\beta(D_i)=(D - D_i)\ldot D_i$. The component $D_i$ is called an \emph{isolated component} of $D$ if $\beta(D_i)=0$. If $\beta(D_i)=1$, then $D_i$ is called a \emph{tip}. If $\beta(D_i)\geq 3$, then $D_i$ is called a \emph{branching component} of $D$. A partial sum of components of $D$, say $L=D_1+\ldots+D_m$, is called a \emph{linear chain} of $D$ if $\beta(D_1)=1$, $\beta(D_i)=2$ for $2 \leq i \leq m-1$, and $D_i \ldot D_{i+1}=1$ for $1\leq i \leq m-1$. If $\beta(D_m)=1$, then $L$ is called a \emph{rod}. If $\beta(D_m)=2$, then $L$ is called a \emph{twig}. In the latter case, $L$ is connected to $D$ by a component $D_{m+1} \notin L$. If $\beta (D_{m+1}) \geq 3$, that is $D_{m+1}$ is a branching component, then $L$ is called a \emph{maximal twig}. A linear chain is called \emph{rational} if $D_i$ is a rational curve for every $i$. It is called \emph{admissible} if $D_i^2 \leq -2$ for every $i$. 

%Note that all these definitions could also be made for the weighted dual graph of $D$, $\Gamma(D)$.

A divisor on $V$ is called \emph{contractible} if the intersection matrix of its irreducible components is negative definite. If a linear chain $L$ of $D$ is rational and admissible, then it is contractible \cite{Tono05}. Moreover, there exists a unique \Q-divisor ${\mathrm{Bk}}(L)$, called the \emph{bark} of $L$, with the property that $(K+D)\ldot D_i={\mathrm{Bk}}(L) \ldot D_i$ for every $i$. 

A component $F$ of $D$ consisting of three rational admissible maximal twigs and a rational curve $F_1$ is called a \emph{fork} if $(K+F+B) \ldot F_1<0$, where $B$ is the sum of the barks of the three maximal twigs. A fork is called admissible if $F_1^2\leq -2$, and a fork is admissible if and only if it is contractible \cite{Tono05}.

The \emph{bark} of $D$, ${\mathrm{Bk}}(D)$ is defined to be the sum of the barks of all rational admissible rods, rational admissible forks and the remaining rational admissible twigs.

We call the the pair $(V,D)$ \emph{almost minimal} if for every irreducible curve $M$ in $V$, either $(K+D-{\mathrm{Bk}}(D))\ldot M\geq 0$ or $(K+D-{\mathrm{Bk}}(D))\ldot E<0$ and $\mathrm{Bk}(D)+M$ is \emph{not} contractible.

We also need the following proposition before we state and prove the main theorem. This proposition holds for nonsingular projective surfaces defined over $\C$, and it is proved by Tono in \cite[Corollary 4.4, p.219]{Tono05}, here stated for our situation.
\begin{prp}\label{Tono44}
Let $l$ denote the number of rational maximal twigs of $D$. If $\ol{\kappa}(V \setminus D)=2$, if the pair $(V,D)$ is almost minimal, and if $D$ contains neither a rod consisting of $(-2)$-curves nor a fork consisting of $(-2)$-curves, then
\begin{equation}
l\leq 12e(V\setminus D) + 5 - 3 p_a(D).
\end{equation}
\end{prp}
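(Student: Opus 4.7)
The proposition is a purely numerical statement about the almost minimal pair $(V,D)$: the hypotheses encode all the information needed, and the conclusion does not reference the ambient surface. I will therefore follow Tono's proof of \cite[Corollary 4.4]{Tono05} directly, organising the argument in three steps.

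First, I apply the logarithmic Bogomolov--Miyaoka--Yau inequality for the almost minimal pair $(V,D)$. Because $(V,D)$ is almost minimal, $\ol{\kappa}(V\setminus D) = 2$, and $D$ contains no rod or fork of $(-2)$-curves, the negative part in the Zariski--Fujita decomposition $K_V + D = H + N$ is exactly the bark $\mathrm{Bk}(D)$. The inequality recorded in Corollary \ref{cor:BMY}(b) therefore reads
$$H^2 = (K_V + D - \mathrm{Bk}(D))^2 \leq 3\,e(V\setminus D).$$

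Second, I expand $H^2 = (K_V+D)^2 - \mathrm{Bk}(D)^2$ using the orthogonality $H \ldot \mathrm{Bk}(D) = 0$, and I rewrite $(K_V+D)^2$ through Noether's formula $12\chi(\Os_V) = K_V^2 + e(V)$, the adjunction relation $(K_V+D)\ldot D = 2p_a(D)-2$, and the topological splitting $e(V) = e(V\setminus D) + e(D)$, with $e(D)$ computed from the SNC dual graph. Together these reformulate the BMY bound as an inequality between $-\mathrm{Bk}(D)^2$ and a linear combination of $e(V\setminus D)$ and $p_a(D)$, modulo combinatorial data attached to $D$.

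Third, I analyse the bark contributions twig by twig. For each rational admissible maximal twig $L$ with self-intersection sequence $(-b_1,\ldots,-b_m)$, the quantity $-\mathrm{Bk}(L)^2$ is positive and computable explicitly from the continued fraction determined by the $b_i$. The essential input is that each of the $l$ maximal twigs contributes at least a fixed positive amount (and the exclusion of $(-2)$-rods and $(-2)$-forks ensures the rod and fork summands enter with the correct sign), so that summing over the twigs and combining with the reformulated BMY bound yields $l \leq 12\,e(V\setminus D) + 5 - 3\,p_a(D)$. The hard part will be the precise bookkeeping here: matching the coefficient $12$ demands that the Noether substitution and the twig-by-twig lower bound align exactly, and producing the residual constants $5$ and $-3\,p_a(D)$ requires careful accounting of the adjunction contribution and of the SNC Euler-characteristic formula for $e(D)$. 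This numerical arithmetic is the technical heart of \cite[Section 4]{Tono05} and cannot be shortened without losing the constants.
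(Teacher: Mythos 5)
The paper does not actually prove this proposition: it is quoted, with adapted notation, from \cite[Corollary 4.4, p.219]{Tono05}, together with the remark that it holds for any nonsingular projective surface over $\C$, and the citation is the entire justification. So the honest comparison is between your sketch and Tono's argument, and there your proposal has a genuine gap. Steps one and two are a reasonable reconstruction of the setup --- for an almost minimal pair the negative part of the Zariski--Fujita decomposition of $K_V+D$ is indeed $\mathrm{Bk}(D)$, so $H^2=(K_V+D)^2-\mathrm{Bk}(D)^2$ and the logarithmic B--M--Y inequality bounds $H^2$ by $3e(V\setminus D)$ --- but in step three you explicitly decline to carry out the computation that produces the conclusion. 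The entire content of the proposition is the specific linear bound $l\leq 12e(V\setminus D)+5-3p_a(D)$: the uniform lower bound on $-\mathrm{Bk}(L)^2$ per rational admissible maximal twig, the precise role of the no-$(-2)$-rod and no-$(-2)$-fork hypotheses in making that bound usable, and the bookkeeping that converts $3e(V\setminus D)$ into the coefficient $12$ and produces the constants $5$ and $-3p_a(D)$ are exactly what must be supplied, and you acknowledge leaving them out (``the hard part will be the precise bookkeeping here''). A proof that ends by pointing at the technical heart of \cite[Section 4]{Tono05} is a citation, not a proof --- which, to be fair, is also what the paper does, but the paper does it openly.

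One smaller point: you invoke Corollary \ref{cor:BMY}$(b)$, but that corollary is stated for the minimal embedded resolution of a cuspidal curve on $\Fe$, whereas Proposition \ref{Tono44} is a general statement about an almost minimal SNC pair $(V,D)$ on an arbitrary nonsingular projective surface. The inequality you need is the general logarithmic B--M--Y inequality of \cite[Theorem 2.1, p.660]{Ore} (or Tono's own version of it) applied to the almost minimal pair; citing the Hirzebruch-specific corollary inverts the logical order, since that corollary is itself a specialization of the general statement you are trying to use.
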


We are now ready to give an upper bound on the number of cusps on a rational cuspidal curve on $\Fe$. The result is similar to the one given by Tono in \cite{Tono05} for $\Po$ (cf. Theorem \ref{ONCP2}).

\begin{thm}\label{ONCfe}
The number of cusps $s$ on a cuspidal curve $C$ of genus $g$ on a Hirzebruch surface $\Fe$ has an upper bound, $$s \leq \frac{21g+29}{2}.$$
\end{thm}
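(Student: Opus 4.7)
My plan is to transplant Tono's proof in \cite{Tono05} to the Hirzebruch setting, using the three ingredients assembled in Sections \ref{M} and \ref{PR}: the Euler characteristic formula of Lemma \ref{euler}, the logarithmic Kodaira dimension result of Theorem \ref{logkfe}, and the numerical bound of Proposition \ref{Tono44}. The target is the chain of inequalities
\begin{equation*}
2s \;\leq\; l \;\leq\; 12\,e(\Fe\setminus C)+5-3p_a(D)\;=\;12(2g+2)+5-3g\;=\;21g+29,
\end{equation*}
from which $s\leq\tfrac{21g+29}{2}$ follows by division by two.

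Before invoking the main machinery I would dispose of the degenerate ranges. If $b\leq 2$ the curve is either a section (hence smooth, with $s=0$) or a double section, and in the latter case Corollary \ref{genusfe} directly bounds $\sum_j\delta_j$, and hence $s$, well below $\tfrac{21g+29}{2}$. Similarly, $s\leq 2$ is trivial since $\tfrac{21g+29}{2}\geq\tfrac{29}{2}>2$. In the remaining range the hypotheses of Theorem \ref{logkfe} are in force, giving $\lkkf=2$.

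The body of the argument proceeds exactly as in \cite{Tono05}. Let $(V,D)$ be the minimal embedded resolution of $C$, so $D=\tilde{C}+\sum_{i=1}^t E_i'$, and let $(V',D')$ be an associated almost minimal model, obtained by iteratively contracting $(-1)$-curves in $V\setminus D$ and absorbing suitable peripheral rational $(-1)$-curves of $D$ via the Zariski--Fujita procedure. Three invariants are preserved by these contractions: the logarithmic Kodaira dimension (which, being a birational invariant of the complement, remains $2$); the Euler characteristic $e(V'\setminus D')=e(\Fe\setminus C)=2g+2$ by Lemma \ref{euler}; and the arithmetic genus $p_a(D')=p_a(D)=g$. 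The identity $p_a(D)=g$ is immediate from adjunction, since the dual graph of the minimal embedded resolution of a cuspidal curve is a tree whose only non-rational component is the smooth proper transform $\tilde{C}$ of genus $g$. One must also verify at this point that $D'$ contains neither a rod nor a fork of $(-2)$-curves, which is the standard Tono dichotomy: otherwise a further admissible contraction would contradict almost minimality.

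The remaining and most delicate step is the combinatorial inequality $l\geq 2s$, bounding the number of rational maximal twigs of $D'$ from below by twice the number of cusps. This is obtained by analysing the dual graph of the minimal embedded resolution of each individual cusp: each cusp contributes a characteristic branching configuration forcing at least two end-chains, hence at least two maximal twigs, to descend from it. The planar case analysis of Tono transfers verbatim here, since the shape of the resolution tree of a unibranch plane curve singularity is a purely local invariant independent of whether the ambient surface is $\Po$ or $\Fe$. Combining $l\geq 2s$ with Proposition \ref{Tono44} applied to $(V',D')$ yields the displayed chain and completes the proof.

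The main obstacle I anticipate is the bookkeeping in the almost minimalization step: on $\Fe$ with $e>0$ one has the negative section $\RH_0$ with $\RH_0^2=-e$, a feature absent on $\Po$, and one has to confirm that its presence does not disturb either the exclusion of rods and forks of $(-2)$-curves in $D'$ or the count $l\geq 2s$ after all contractions have been performed. The verification should go through case-by-case as in \cite{Tono05}, but it is precisely here that the Hirzebruch geometry must be checked to introduce no genuinely new configurations.
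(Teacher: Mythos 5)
Your proposal follows essentially the same route as the paper: reduce to $s\geq 3$, pass to an almost minimal model $(V',D')$ via Tsunoda, use Theorem \ref{logkfe} to get $\lkkf=2$, establish $2s\leq l$, and conclude with Proposition \ref{Tono44}. One claim in your argument is, however, false as stated: the Euler characteristic of the complement is \emph{not} preserved under the almost minimalization. Each $(-1)$-curve $M_j$ contracted by $\mu$ and not contained in $D$ satisfies $D\ldot M_j=1$ (it cannot be disjoint from $D$ because $V\setminus D$ is affine, and cannot meet $D$ twice without disconnecting it), so contracting it removes a copy of $\mathbb{A}^1$ from the complement whose image lands on $D'$; the paper accordingly gets $e(V'\setminus D')=2g+2-n$, where $n$ is the number of such curves. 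Your equality $e(V'\setminus D')=2g+2$ is therefore wrong, but harmlessly so, since Proposition \ref{Tono44} only needs the upper bound $e(V'\setminus D')\leq 2g+2$, and the final estimate $2s\leq 21g+29-12n\leq 21g+29$ survives. The step $l\geq 2s$, which you defer to Tono ``verbatim,'' is the part the paper actually spells out (one must check that $\mu$ contracts neither the last exceptional curve $E_j$ over each cusp nor $\tilde{C}$, both being branching components, before counting two maximal twigs per cusp); your sketch identifies the right mechanism but would need that verification written out. On the other hand, you are more careful than the paper in checking the hypotheses $b>2$, $a>0$ of Theorem \ref{logkfe} before invoking it --- though your dismissal of $b=2$ ``directly from the genus formula'' is too quick, since a double section can a priori have many multiplicity-$2$ cusps; one needs Riemann--Hurwitz for $\pi|_C$ (each cusp is a ramification point, so $s\leq 2g+2$) to close that case.
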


\begin{proof}[Proof (cf. Tono \cite{Tono05})] The proof given by Tono in \cite{Tono05} for $\Po$ is directly applicable in this situation, and the following is essentially the same proof. We include the proof here for the sake of completion, and at some places we have chosen to write out the details more carefully than in the original proof. 

The aim of the proof is to set up a situation where we can apply Proposition \ref{Tono44}, and then the theorem follows. 

We now construct the surface to which we can apply Proposition \ref{Tono44}, and first show that two of the prerequisites in the proposition hold for this surface. Let $C=\V(F)$ be a cuspidal curve of genus $g$ on $\Fe$. Let $s$ denote the number of cusps on $C$. We are looking for an upper bound of $s$, hence we may assume that $s\geq 3$. 

Let $\sigma:V \longrightarrow \Fe$ denote the minimal embedded resolution of $C$, and let $D=\tilde{C}+\sum_{i=1}^tE_i$ be the reduced total inverse image of $C$ on the surface $V$. For a cusp $p$, the dual graph of $\sigma^{-1}(p)+\tilde{C}$ has the shape given in Figure \ref{mercusp}.

\begin{figure}[ht] 
  \begin{center}
\includegraphics[trim = 45mm 230mm 45mm 30mm, clip, width=1\linewidth]{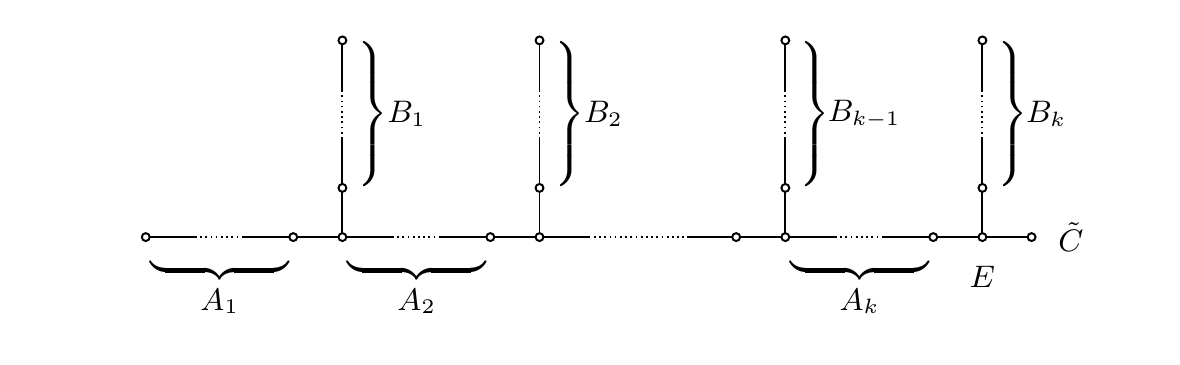}
  \end{center}
   \caption[The dual graph of the minimal embedded resolution of a cusp.]{The dual graph of $\sigma^{-1}(p)+\tilde{C}$.}
\label{mercusp}
\end{figure}

In Figure \ref{mercusp}, $E$ denotes the last blowing up in the resolution of $p$, and we have $E^2 =-1$. All other curves in $D$ have self intersection $\leq -2$. Notice that the morphism $\sigma$ can be viewed as successive contractions in a way that can be handled with a quite clean notation. For a cusp $p$, $\sigma$ first contracts $E+A_k+B_k$ in Figure \ref{mercusp} to a $(-1)$-curve $E'$. The process then continues in the same manner, with the contraction of $E'+A_{k-1}+B_{k-1}$ to another $(-1)$-curve and so on, until we reach $\Fe$. 

Considering the graph of the minimal embedded resolution of all cusps on $C$, we see that $D$ is connected. Notice that $D$ contains $s$ curves $E_j$ with self intersection $-1$, all of which are branching components, and that the strict transform $\tilde{C}$ of $C$ is also a branching component when $s\geq3$.

We do not know if the pair $(V,D)$ is almost minimal, so we cannot use Proposition \ref{Tono44} on this surface directly. We solve this problem by applying a theorem by Tsunoda in \cite{Tsunoda} (see \cite[Lemma 3.2, p.218]{Tono05}). By \cite[Theorem 1.11, p.226]{Tsunoda}, there exists a birational morphism $\mu:V \longrightarrow V'$, consisting of successive contractions of $(-1)$-curves such that, with $D'=\mu_{*}D$, the pair $(V',D')$ is almost minimal and $\ol{\kappa}(V \setminus D)=\ol{\kappa}(V' \setminus D')$. Since we assume that $s\geq 3$, by definition, Theorem \ref{logkfe}, and \cite[Theorem 1.11, p.226]{Tsunoda}, we have $$2=\lkkf=\ol{\kappa}(V \setminus D)=\ol{\kappa}(V' \setminus D').$$

Before we show that the third prerequisite in Proposition \ref{Tono44} holds for $(V',D')$, we must estimate some of the invariants involved in the formula in Proposition \ref{Tono44}. We begin with the Euler characteristic. By Lemma \ref{euler} we have $e(V\setminus D)=e(\Fe \setminus C)=2g+2$. To determine $e(v' \setminus D')$ we investigate the morphism $\mu$ more closely. The morphism $\mu$ is a composition of contractions, and we let $M_1,\ldots, M_n \subset V$ denote the strict transforms of the $(-1)$-curves that are contracted by $\mu$ and \emph{not} contained in $D$. Observe that $\mu$ possibly contracts some $(-1)$-curves contained in $D$ in addition to the $M_j$'s, but these contractions would not affect the Euler characteristic of the complement. Since $D$ is connected, we must by Tono \cite[Lemma 3.4, p.218]{Tono05} have $D\ldot M_j\leq 1.$ Moreover, since $V \setminus D \cong \Fe \setminus C \cong \D_{+}(F)$, it is affine. Hence, $M_j \cong \Pot$ cannot be contained in $V\setminus D$, and therefore $D \ldot M_j=1\label{TONO:affin n}$. Using this information, we calculate the Euler characteristic, \begin{equation*}e(V'\setminus D')=e(V \setminus D)-n=2g+2-n. \label{TONO:EC}\end{equation*} Note that since $V' \setminus D' \cong V \setminus D$ is affine, it follows that $e(V' \setminus D')>0$. 

Our next aim is to ensure that the number $l$ of rational maximal twigs of $D'$ can be estimated by the number of cusps on $C$. This estimate relies on the fact that some of the components of $D$ cannot be contracted by $\mu$. 

For each cusp $p_j$, $j=1, \ldots, s$, let $E_j$ denote the $(-1)$-curve that intersects $\tilde{C}$ in the minimal embedded resolution. We will now show by contradiction that $\mu$ does not contract any $E_j$. Assume for contradiction that one $E_j$, say $E$, is contracted by $\mu$. Now $E$ is a branching component of $D$, hence it cannot be directly contracted by $\mu$ \cite[Lemma 3.4, p.218]{Tono05}, and $\mu$ must contract a $(-1)$-curve that intersects $E$. Contracting the $(-1)$-curve intersecting $E$ turns $E$ into a curve with nonnegative self intersection. Then $E$ cannot be contracted, contrary to the assumption. We conclude that $E_j$ cannot be contracted for any cusp. 

We additionally have to ensure that $\tilde{C}$ is not contracted by $\mu$. This can be shown by induction on the number of blowing downs in the morphism $\mu$. Note that this part of the proof is also by Tono (personal communication). Let $\mu=\mu_{\nu} \circ \cdots \circ \mu_1$, ${\nu}\geq n$, be a decomposition of $\mu$. Then $\mu_1$ cannot contract $\tilde{C}$ since $s \geq3$ makes $\tilde{C}$ a branching component of $D$, that is, $\beta_D(\tilde{C})\geq3$. That would contradict \cite[Lemma 3.4, p.218]{Tono05}. So suppose that $\mu_k \circ \cdots \circ \mu_1$, $k<n$, does not contract the strict transform of $\tilde{C}$ by $\mu_{k-1}\circ \cdots \circ \mu_1$. Let $D^k$ and $\tilde{C}^k$ denote the strict transforms of $D$ and $\tilde{C}$ under $\mu_k \circ \cdots \circ \mu_1$. Now since $\mu$ does not contract any of the last exceptional curves $E_j$ for any cusp $p_j$, $\tilde{C}^k$ will still be a branching component of $D^k$. Then $\mu_{k+1}$ cannot contract $\tilde{C}^k$, because that would contradict \cite[Lemma 3.4, p.218]{Tono05}. Hence, $\mu_{k+1}\circ \cdots \circ \mu_1$ does not contract $\tilde{C}$. So by induction, $\tilde{C}$ cannot be contracted by $\mu$. 

The number $l$ of rational maximal twigs of $D'$ can now be estimated by the number of cusps on $C$. For each cusp $p$, let $A=\sigma^{-1}(p)-E-B_k$. The morphism $\mu$ affects the tree of rational curves $A+E+B_k$, and contracts it at most to another tree of rational curves. Since $E$ is not contracted by $\mu$, $\mu(E)$ must be a curve with self intersection $\geq -1$. Then by \cite[Lemma 3.5, p.218]{Tono05}, $\mu(E)$ cannot be part of any rational linear chain or fork, hence not part of any rational maximal twig of $D'$. This implies that $A$ cannot be contracted to a point by $\mu$. Furthermore, $\mu(B_k)$ can be contracted to a point, but then $\mu(A)$ has to contain at least two rational maximal twigs in order to avoid that $\mu(E)$ is part of a rational maximal twig. Summing up, we observe that $D'$ must have at least two rational maximal twigs per cusp, so we have $2s \leq l$.

Now we note that the third prerequisite in Proposition \ref{Tono44} holds for $(V',D')$. The morphism $\mu$ does not disconnect $D$, so $D'$ is connected. Since $D'$ is connected and additionally has at least $6 \leq 2s$ maximal twigs, it is impossible that it contains a rod consisting of $(-2)$-curves or a fork consisting of $(-2)$-curves.

Proposition \ref{Tono44} additionally involves the invariant $p_a(D')$, which is equal to $g$ in this case. Indeed, since $\tilde{C}$ is nonsingular, $p_a(\tilde{C})=g$. Since $D'=\tilde{C}+\sum E_i$, not necessarily for all $i$, and since $D'$ is an $SNC$-divisor, we have that $p_a(D')=p_a(\tilde{C})=g$.

By Proposition \ref{Tono44} applied to $(V', D')$ and the above estimates, we then find the desired upper bound on the number of cusps,
\begin{align*}
2s &\leq 12(2g+2-n)+5-3g \notag\\
 & \leq 21g+29-12n\\
 &\leq 21g+29.
\end{align*} 

\end{proof}

We immediately get a corollary for rational cuspidal curves on $\Fe$ (see \cite{MOECCH}).
\begin{cor}
A rational cuspidal curve on $\Fe$ can not have more than 14 cusps.
\end{cor}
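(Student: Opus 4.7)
The plan is to adapt Tono's method: pass from the minimal embedded resolution $(V,D)$ of $C$ to an almost-minimal model $(V',D')$ via Tsunoda's contraction theorem, verify the hypotheses of Proposition \ref{Tono44}, and bound the number $l$ of rational maximal twigs of $D'$ from below by $2s$. I may assume $s\geq 3$, since otherwise the bound is immediate.

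First I would let $\sigma:V\to\Fe$ be the minimal embedded resolution, with reduced inverse image $D=\tilde{C}+\sum E_i$. By Theorem \ref{logkfe} we have $\lkkf=2$, so Tsunoda's theorem produces a birational morphism $\mu:V\to V'$, obtained as a composition of contractions of $(-1)$-curves, such that $(V',D')$ with $D'=\mu_{*}D$ is almost minimal and $\ol{\kappa}(V'\setminus D')=2$. To estimate the invariants entering Proposition \ref{Tono44}: Lemma \ref{euler} gives $e(V\setminus D)=2g+2$. If $M_1,\ldots,M_n\subset V$ denote the strict transforms of those $(-1)$-curves contracted by $\mu$ which are \emph{not} contained in $D$, then $V\setminus D\cong\Fe\setminus C$ is affine (complement of a hypersurface in $\Fe$), so each $M_j\cong\Pot$ must meet $D$, and Tono's contraction lemma forces $D\ldot M_j=1$. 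Hence $e(V'\setminus D')=2g+2-n$. Since $\tilde{C}$ is smooth of geometric genus $g$ and survives into $D'$, and $D'$ is an SNC tree, I would conclude $p_a(D')=p_a(\tilde{C})=g$.

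The combinatorial core of the proof, which I expect to be the main obstacle, is showing that $l\geq 2s$. For each cusp $p_j$ let $E_j$ denote the last $(-1)$-curve of the local minimal resolution, so $E_j\ldot\tilde{C}=1$. Because $s\geq 3$, the strict transform $\tilde{C}$ is a branching component of $D$; so is each $E_j$. Tono's contraction lemma forbids directly contracting a branching component of $D$, and if $\mu$ were to contract a $(-1)$-curve adjacent to some $E_j$, then $E_j$ would acquire nonnegative self-intersection, preventing any subsequent contraction. An induction on a factorization $\mu=\mu_\nu\circ\cdots\circ\mu_1$ then shows that neither $\tilde{C}$ nor any $E_j$ is contracted by $\mu$. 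Inspecting the local dual graph at $p_j$ (Figure \ref{mercusp}), preservation of $E_j$ forces the image $\mu(A_j)$, with $A_j=\sigma^{-1}(p_j)-E_j-B_{k_j}$, to contribute at least two rational maximal twigs to $D'$, since otherwise $\mu(E_j)$ would itself end up inside a rational maximal twig, contradicting a further lemma of Tono. Summing over cusps yields $l\geq 2s$.

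Finally, since $D$ is connected and $\mu$ does not disconnect it, $D'$ is connected; with $l\geq 2s\geq 6$ maximal twigs, $D'$ cannot be a rod of $(-2)$-curves or a fork of $(-2)$-curves. All hypotheses of Proposition \ref{Tono44} are then met, and I would conclude with
$$2s\leq l \leq 12\,e(V'\setminus D')+5-3p_a(D') = 12(2g+2-n)+5-3g \leq 21g+29,$$
which is the desired bound $s\leq (21g+29)/2$.
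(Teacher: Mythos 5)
Your proposal is correct and follows essentially the same route as the paper: it reproves Theorem \ref{ONCfe} via Tsunoda's almost-minimal model, the estimates $e(V'\setminus D')=2g+2-n$ and $p_a(D')=g$, the twig count $l\geq 2s$, and Proposition \ref{Tono44}, after which the corollary is the specialization $g=0$, giving $s\leq 29/2$ and hence $s\leq 14$. The only cosmetic omission is that you stop at the general bound without explicitly writing out this last trivial step for $g=0$.
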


Note that in the subsequent article \cite{MOECCH} we find examples of rational cuspidal curves with four cusps on Hirzerbruch surfaces. Although the bound that we have found here is 14, there are, however, no examples of curves with more than four cusps. The lack of examples leads us to the following conjecture.
\begin{conj*}
A rational cuspidal curve on a Hirzebruch surface has at most four cusps.
\end{conj*}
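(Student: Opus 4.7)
Since this conjecture is a direct analogue of the still-open Orevkov conjecture for $\Po$, my proposal is a three-pronged strategy combining birational reduction, a refinement of the logarithmic BMY inequality, and explicit case analysis. Realistically it would yield either a conditional result or an unconditional proof only for restricted types $(a,b)$.

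The first strand is reduction via birational geometry. Any two Hirzebruch surfaces are connected by sequences of elementary transformations, and $\Fen$ is isomorphic to $\Po$ blown up at a point. Given a cuspidal $C \subset \Fe$, I would perform elementary transformations centered at generic smooth points of $C$ (disjoint from the cusps and from the special section $\RH_0$) to produce a cuspidal curve $C'$ on $\Fen$ whose multiplicity sequences agree with those of $C$. Contracting the $(-1)$-curve on $\Fen$ then yields a cuspidal curve on $\Po$ with at most one additional cusp, namely the image of the contracted exceptional divisor. Assuming Orevkov's conjecture on $\Po$, this bounds $s$ by roughly $4$ plus a controllable discrepancy; the crux of the argument is to show that, by choosing the transformations carefully, the discrepancy actually vanishes.

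The second strand is a refinement of the logarithmic BMY inequality behind Theorem~\ref{ONCfe}. The bound $s \leq 14$ arises because Proposition~\ref{Tono44} uses only the crude estimate $2s \leq l$ for the number of rational maximal twigs. A sharper approach would compute, for each cusp with multiplicity sequence $\ol{m}$, the exact contribution $\mathrm{Bk}(L) \ldot \mathrm{Bk}(L)$ to $H^2$ in the Zariski--Fujita decomposition and then apply $H^2 \leq 3 e(\Fe \setminus C) = 6g + 6$ from Corollary~\ref{cor:BMY}(b). These contributions depend explicitly on the $m_i$, so summing over cusps gives a system of inequalities in $(a,b,\{m_{j,i}\})$ which, combined with the genus formula of Corollary~\ref{genusfe}, should eliminate configurations with $s \geq 5$.

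The third strand is a concrete case analysis for small $s$. For rational curves with $s = 5, 6, \ldots, 14$ I would enumerate the admissible multiplicity configurations satisfying
\[
(b-1)(2a-2+be) \;=\; \sum_{j=1}^{s} \sum_{i} m_{j,i}(m_{j,i}-1),
\]
together with $b \geq \max_{j} m_{j,0}$ (from $C \ldot \LH \geq m_{j,0}$) and $a + be \geq \max_{j} m_{j,0}$ (from $C \ldot \RH \geq m_{j,0}$), and rule out each case by an explicit geometric obstruction: for instance, by exhibiting an auxiliary pencil through several cusps whose intersection with $\tilde C$ forces a contradiction, in the spirit of Piontkowski's verification for low degrees on $\Po$. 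The main obstacle is plain: the problem on $\Po$ is itself open, so any unconditional proof on $\Fe$ would carry at least the depth of Orevkov's conjecture. The most tractable outcome of this plan is therefore a \emph{conditional} theorem assuming Orevkov's conjecture or the Flenner--Zaidenberg rigidity conjecture, supplemented by unconditional verification for $(a,b)$ of bounded size.
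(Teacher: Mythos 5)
This statement is stated in the paper as an open conjecture, motivated only by the absence of examples with more than four cusps; the paper offers no proof, and indeed the theorem actually proved there (Theorem \ref{ONCfe}) only gives the bound $14$ for rational curves. You correctly recognize that any unconditional proof would be at least as deep as Orevkov's conjecture on $\Po$, and your proposal is honestly framed as a research program rather than a proof. As such it cannot be counted as a proof of the statement, and it is worth being concrete about where each strand breaks down.

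The first strand has a genuine geometric flaw: an elementary transformation of $\Fe$ centered at a \emph{generic} smooth point $p$ of a curve $C$ of type $(a,b)$ does not preserve cuspidality when $b\geq 2$. The fiber $L_p$ through $p$ meets $C$ in $b$ points, of which $b-1$ are distinct from $p$ generically; after blowing up $p$ and contracting the strict transform of $L_p$, those $b-1$ points are identified to a single point of the image curve, which is then a singular point with $b-1$ branches --- not a cusp. The same problem occurs when you contract the $(-1)$-curve $E$ on $\Fen$ to reach $\Po$: if the strict transform meets $E$ in more than one point, the image acquires a reducible germ, so the resulting plane curve is not cuspidal and Orevkov's conjecture does not apply to it. So even the conditional reduction does not go through without substantial extra work at special centers. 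The second strand is essentially Tono's own refinement; in the plane it yields at most $8$ cusps for rational curves, not $4$, so there is no reason to expect the analogue on $\Fe$ to close the gap to $4$. The third strand is not a finite check: for fixed $s$ the type $(a,b)$ ranges over infinitely many values, so unlike Piontkowski's degree-bounded verification on $\Po$, an enumeration of configurations cannot terminate without an a priori bound on $(a,b)$, which is precisely what is missing.
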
 

\bibliographystyle{hacm}
\bibliography{bib2}

\end{document}